\newtheorem{lem}{Lemma}[section]
\newtheorem{thm}[lem]{Theorem}
\newtheorem{cor}[lem]{Corollary}
\newtheorem{prop}[lem]{Proposition}
\newcommand{\im}{\mathrm{im}}
\def\ker{\mathop{\rm ker}}
\def\im{\mathop{\rm im}}
\newcommand{\reg}{\mathrm{Reg}}
\begin{document}
\begin{center}
\textbf{\large{The regular part of transformation semigroups that preserve double direction equivalence relation}}\footnote{Part of this work was presented in the Workshop on General Algebra / Arbeitstagung Allgemeine Algebra (AAA102), University of Szeged, Hungary, June 24-26, 2022.}\\
\vspace{0.5 cm} Kritsada Sangkhanan \orcidlink{0000-0002-1909-7514}
\end{center}

\begin{abstract}
	Let $T(X)$ be the full transformation semigroup on a set $X$ under the composition of functions. For any equivalence relation $E$ on $X$, define a subsemigroup $T_{E^*}(X)$ of $T(X)$ by
	\[
	T_{E^*}(X)=\{\alpha\in T(X):\text{for all}\ x,y\in X, (x,y)\in E\Leftrightarrow (x\alpha,y\alpha)\in E\}.
	\]
	In this paper, we show that the regular part of $T_{E^*}(X)$, denoted $\reg(T)$, is the largest regular subsemigroup of $T_{E^*}(X)$. Then its Green's relations and ideals are described. Moreover, we find the kernel of $\reg(T)$ which is a right group and can be written as a union of symmetric groups. Finally, we prove that every right group can be embedded in that kernel.
\end{abstract}
\noindent\textbf{2020 Mathematics Subject Classification:} 	20M17, 20M19, 20M20\\
\noindent\textbf{Keywords:} transformation semigroup, equivalence relation, Green's relations, right group, embeddability

\section{Introduction}
Let $T(X)$ be the set of all functions from a set $X$ into itself. We have $T(X)$ under the composition of functions is a regular semigroup which is called the \textit{full transformation semigroup on $X$}. Nowadays, this semigroup plays an important role in algebraic semigroup theory. Analogous to Cayley's Theorem for groups, we can show that every semigroup $S$ can be embedded in the full transformation semigroup $T(S^1)$.

In 2005, H. Pei \cite{Pei} introduced a new semigroup which is a generalization of $T(X)$ as follows. For an equivalence relation $E$ on $X$, the author defined a subsemigroup $T_E(X)$ of $T(X)$ by
\[
T_{E}(X)=\{\alpha\in T(X):\forall x,y\in X,(x,y)\in E\Rightarrow (x\alpha,y\alpha)\in E\}
\] 
and called $T_E(X)$ the \textit{transformation semigroup that preserve equivalence}. Clearly, if $E$ is the universal relation, $X\times X$, then $T_E(X)=T(X)$. Furthermore, in topology, $T_E(X)$ is precisely the semigroup of all continuous self-maps of the topological space $X$ for which all $E$-classes form a basis, denoted by $S(X)$. The regularity of elements and Green's relations for $T_E(X)$ was investigated in \cite{Pei}. Later, in 2008, L. Sun, H. Pei and Z. Cheng \cite{Sun} gave a characterization for the natural partial order $\leq$ on $T_E(X)$.

In 2010, L. Deng, J. Zeng and B. Xu \cite{Deng} defined a subsemigroup $T_{E^*}(X)$ of $T_E(X)$ by adding the reverse direction of the equivalence preserving, namely,
\[
T_{E^*}(X)=\{\alpha\in T(X):\forall x,y\in X,(x,y)\in E\Leftrightarrow (x\alpha,y\alpha)\in E\}.
\]
Similar to $T_E(X)$, if $E$ is the universal relation on $X$, then $T_{E^*}(X)=T(X)$ which implies that this semigroup is also a generalization of $T(X)$. Moreover, $T_{E^*}(X)$ is a semigroup of continuous self-maps of the topological space $X$ for which all $E$-classes form a basis and is referred to as a semigroup of continuous functions (see \cite{Magill} for details). In \cite{Deng}, the authors called $T_{E^*}(X)$ the \textit{transformation semigroup that preserve double direction equivalence}, and then they studied the regularity of its elements and Green's relations. Furthermore, a characterization of the natural partial order $\leq$ on $T_{E^*}(X)$ was investigated by L. Sun and J. Sun \cite{Sun2} in 2013.

Since, in general, the semigroup $T_{E^*}(X)$ is not regular, the aim of this paper is to study the regular part of $T_{E^*}(X)$, denoted by $\reg(T)$. We show in Section \ref{sec: Largest regular subsemigroup and its Green's relations} that $\reg(T)$ is the largest regular subsemigroup of $T_{E^*}(X)$. As we will discuss later on, it is worth studying the algebraic properties of such a regular part in its own right since it is almost never isomorphic to $T(Z)$ for any set $Z$.

In section \ref{sec: Largest regular subsemigroup and its Green's relations}, we characterize Green's relations on $\reg(T)$ by applying Hall's Theorem on the results from \cite{Deng}. Then, in Section \ref{sec: Ideals}, we deal with ideals of $\reg(T)$ and obtain the smallest ideal which becomes the kernel of it. Moreover, we will see that this kernel is a right group which can be written as a union of symmetric groups. Finally, we focus on the embeddability of any right group into the kernel of $\reg(T)$.

Now, we first state some definitions, notations and results which will be used later in our paper. For all undefined notions, the reader is referred to \cite{Clifford1,Clifford,Howie}.

Let $X$ be a nonempty set and $E$ an equivalence relation on $X$. The family of all equivalence classes, denoted $X/E$, is a partition of $X$, i.e., they are mutually disjoint and their union is $X$. For each $\alpha\in T(X)$ and $A\subseteq X$, let $A\alpha=\{a\alpha:a\in A\}$. Evidently, by this notation, we have $X\alpha$ means the range of $\alpha$. The partition of $X$ induced by $\alpha$, denoted $\pi(\alpha)$, is the family of all inverse image of elements in the range of $\alpha$, that is,
\[
\pi(\alpha)=\{ x\alpha^{-1} : x\in X\alpha\}.
\]
It is easy to see that $\pi(\alpha)=X/\ker(\alpha)$ where $\ker(\alpha)=\{(x,y)\in X\times X : x\alpha=y\alpha\}$ is an equivalence relation on $X$. In addition, due to \cite{Deng}, define a mapping $\alpha_*$ from $\pi(\alpha)$ onto $X\alpha$ by 
\[
(x\alpha^{-1})\alpha_*=x\ \text{for each}\ x\in X\alpha.
\]
For each $\alpha\in T(X)$ and $A\subseteq X$, the restriction of $\alpha$ to $A$ is denoted by $\alpha|_A$. 

Let $E$ be an equivalence relation on a set $X$ and let $Y,Z$ be subsets of $X$ and $\varphi$ a mapping from $Y$ into $Z$. If for any $x,y\in Y$, $(x,y)\in E$ implies $(x\varphi,y\varphi)\in E$, then we say that $\varphi$ is {\it $E$-preserving}. If $(x,y)\in E$ if and only if $(x\varphi,y\varphi)\in E$, then $\varphi$ is said to be {\it $E^*$-preserving}.

\section{Largest regular subsemigroups and Green's relations}\label{sec: Largest regular subsemigroup and its Green's relations}

The purpose of this section is to find the largest regular subsemigroup of $T_{E^*}(X)$ and to characterize its Green's relations. The following crucial theorem has been proven in \cite{Deng}.

\begin{thm}\cite[Theorem 3.1]{Deng}
	Let $\alpha\in T_{E^*}(X)$. Then $\alpha$ is regular if and only if $A\cap X\alpha\neq\emptyset$ for any $A\in X/E$.
\end{thm}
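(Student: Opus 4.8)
The plan is to recast regularity in terms of the action of $\alpha$ on the set of $E$-classes. For $\alpha\in T_{E^*}(X)$ the defining double implication guarantees that the assignment $A\mapsto A\overline{\alpha}$, where $A\overline{\alpha}$ is the unique $E$-class containing $a\alpha$ for $a\in A$, is a well-defined \emph{injective} map $\overline{\alpha}\colon X/E\to X/E$ (well-definedness uses the forward implication, injectivity the reverse one). A short computation gives $\overline{\alpha\beta}=\overline{\alpha}\,\overline{\beta}$, so $\alpha\mapsto\overline{\alpha}$ is a homomorphism from $T_{E^*}(X)$ into the monoid of injective self-maps of $X/E$. Finally, since $B\alpha\subseteq B\overline{\alpha}$ for every class $B$, an $E$-class $A$ meets $X\alpha$ precisely when $A$ lies in the image of $\overline{\alpha}$; hence the stated condition is equivalent to $\overline{\alpha}$ being \emph{surjective}, i.e.\ a bijection of $X/E$. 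This reformulation drives both directions.

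For necessity, suppose $\alpha=\alpha\beta\alpha$ with $\beta\in T_{E^*}(X)$. Applying the homomorphism gives $\overline{\alpha}=\overline{\alpha}\,\overline{\beta}\,\overline{\alpha}$; since $(A\overline{\alpha}\,\overline{\beta})\overline{\alpha}=A\overline{\alpha}$ for all $A$ and $\overline{\alpha}$ is injective, we may cancel the rightmost factor to get $\overline{\alpha}\,\overline{\beta}=\id$ on $X/E$. Thus every class $A=(A\overline{\alpha})\overline{\beta}$ already lies in the image of $\overline{\beta}|_{\im\overline{\alpha}}$, so this restriction is a surjection onto $X/E$; as $\overline{\beta}$ is itself injective (same argument as for $\overline{\alpha}$), no class can lie outside $\im\overline{\alpha}$ without colliding under $\overline{\beta}$, forcing $\overline{\alpha}$ to be surjective. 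By the reformulation, every $E$-class meets $X\alpha$.

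For sufficiency, assume $\overline{\alpha}$ is a bijection and build an inverse $\beta$ class by class. For each $A\in X/E$ put $A'=A\overline{\alpha}^{-1}$, the unique class with $A'\alpha\subseteq A$; one checks $A\cap X\alpha=A'\alpha$. Define $\beta$ to map $A$ into $A'$ by choosing, for $y\in A\cap X\alpha$, a preimage $y\beta\in A'$ with $(y\beta)\alpha=y$ (possible since $y\in A'\alpha$), and letting $y\beta$ be any element of $A'$ for $y\in A\setminus X\alpha$. As the classes partition $X$ this defines $\beta\in T(X)$, and because $\beta$ sends the class $A$ into $A'=A\overline{\alpha}^{-1}$ with $\overline{\alpha}^{-1}$ a bijection, $\beta$ carries each $E$-class into a single $E$-class and distinct classes into distinct classes; hence $\beta$ is $E^*$-preserving, so $\beta\in T_{E^*}(X)$. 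Finally, for any $x\in X$ set $y=x\alpha$; then $y$ lies in $A\cap X\alpha$ for $A$ its class, so $x\alpha\beta\alpha=(y\beta)\alpha=y=x\alpha$, giving $\alpha\beta\alpha=\alpha$ and regularity of $\alpha$.

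The main obstacle is the sufficiency direction, specifically forcing $\beta$ to be $E^*$-preserving rather than merely $E$-preserving: the reverse implication for $\beta$ is exactly what fails when some $E$-class misses $X\alpha$, and it is precisely the bijectivity of $\overline{\alpha}$ that lets us route an entire class $A$ back into the single preimage class $A'$ while keeping the class correspondence injective. The necessity direction is comparatively soft once the homomorphism $\alpha\mapsto\overline{\alpha}$ is available, the only delicate point being the passage from injectivity to surjectivity, which is valid even for infinite $X/E$ because it rests on injectivity of $\overline{\beta}$ rather than on any cardinality count.
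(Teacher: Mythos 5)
Your proof is correct. Note that the paper itself does not prove this statement: it is imported verbatim from Deng, Zeng and Xu as \cite[Theorem 3.1]{Deng}, so there is no in-paper argument to compare against. Your route --- encoding each $\alpha\in T_{E^*}(X)$ as an injective self-map $\overline{\alpha}$ of $X/E$, observing that the hypothesis ``every $E$-class meets $X\alpha$'' is exactly surjectivity of $\overline{\alpha}$, and then running both directions through this reformulation --- is a clean, self-contained derivation. The two delicate points both check out: in the necessity direction, $\overline{\alpha}\,\overline{\beta}=\id$ by itself only yields surjectivity of $\overline{\beta}$, and you correctly invoke the injectivity of $\overline{\beta}$ (which holds because $\beta$ must also lie in $T_{E^*}(X)$, not merely $T(X)$) to push a class outside $\im\overline{\alpha}$ into a collision; in the sufficiency direction, the identity $A\cap X\alpha=A'\alpha$ with $A'=A\overline{\alpha}^{-1}$ is what lets the classwise-defined $\beta$ be $E^*$-preserving rather than only $E$-preserving, since the class correspondence $A\mapsto A'$ is the bijection $\overline{\alpha}^{-1}$. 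This is essentially the standard argument for such results (compare the construction of $\delta$ in the proof of Theorem \ref{thm: D relation}), packaged a bit more conceptually through the homomorphism $\alpha\mapsto\overline{\alpha}$; the same bookkeeping reappears later in the paper as the permutation $\tau$ of the index set $I$ at the start of Section \ref{sec: Ideals}.
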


For convenience, we denote the regular part of $T_{E^*}(X)$ by $\reg(T)$, that is,
\[
\reg(T)=\{\alpha\in T_{E^*}(X):A\cap X\alpha\neq\emptyset\ \text{for any}\ A\in X/E\}.
\]
We note that $\reg(T)$ contains the identity map $1_X$.

\begin{thm}
	$\reg(T)$ forms a regular subsemigroup of $T_{E^*}(X)$. Consequently, $\reg(T)$ is the largest regular subsemigroup of $T_{E^*}(X)$
\end{thm}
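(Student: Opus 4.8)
The goal is to show that $\reg(T)$ is a regular subsemigroup, and that it is the \emph{largest} such subsemigroup of $T_{E^*}(X)$. The plan is to break this into two independent tasks: first, closure of $\reg(T)$ under composition, and second, the maximality claim.

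For closure, I would take $\alpha,\beta\in\reg(T)$ and show $\alpha\beta\in\reg(T)$. Since $\reg(T)\subseteq T_{E^*}(X)$ and the latter is already a semigroup, we have $\alpha\beta\in T_{E^*}(X)$ automatically, so the only thing to verify is the defining condition $A\cap X(\alpha\beta)\neq\emptyset$ for every $A\in X/E$. Fix such an $A$. The key observation is that $X(\alpha\beta)=(X\alpha)\beta$, so I need to show every $E$-class meets $(X\alpha)\beta$. The natural approach is to exploit that $\beta$ is $E^*$-preserving (hence both $E$-preserving and reflecting on $E$-classes) together with the hypothesis that $X\beta$ meets every $E$-class and $X\alpha$ meets every $E$-class. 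Concretely, pick any $a\in A$; since $\beta$ is surjective onto its range in an $E$-respecting way, I would trace back through the fact that $\alpha$ already hits every $E$-class to produce a point of $A$ lying in $(X\alpha)\beta$. The step I expect to require the most care is keeping straight how $\beta$ permutes or maps $E$-classes: because $\beta$ is $E^*$-preserving, it induces a well-defined injective map on the set of $E$-classes it touches, and I must use the fact that this induced map, restricted appropriately, is compatible with $\alpha$'s coverage of all classes. This is the main obstacle, and I anticipate it is exactly where the $E^*$ (double-direction) hypothesis, as opposed to mere $E$-preservation, does the work.

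Once closure holds, regularity of the subsemigroup follows immediately: by Theorem~1.3 (the cited \cite[Theorem 3.1]{Deng}), every element of $\reg(T)$ is a regular element of $T_{E^*}(X)$, meaning each $\alpha\in\reg(T)$ has an inverse $\gamma\in T_{E^*}(X)$ with $\alpha\gamma\alpha=\alpha$. To conclude that $\reg(T)$ is regular \emph{as a semigroup}, I would additionally need that such an inverse can be found \emph{inside} $\reg(T)$; the cleanest route is to observe that any $\gamma$ with $\alpha\gamma\alpha=\alpha$ forces $X\alpha\subseteq X\alpha\gamma\alpha\subseteq X\alpha$, and then check that the inverse constructed in the proof of \cite[Theorem 3.1]{Deng} itself meets every $E$-class, so it already lies in $\reg(T)$.

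For maximality, I would argue that $\reg(T)$ contains every regular subsemigroup of $T_{E^*}(X)$. Let $S$ be any regular subsemigroup and let $\alpha\in S$. Then $\alpha$ is a regular element of $S$, hence a fortiori a regular element of $T_{E^*}(X)$ (an inverse in $S$ is an inverse in $T_{E^*}(X)$). By Theorem~1.3, $\alpha$ satisfies $A\cap X\alpha\neq\emptyset$ for every $A\in X/E$, which is precisely the condition for $\alpha\in\reg(T)$. Thus $S\subseteq\reg(T)$, and since we have already shown $\reg(T)$ is itself a regular subsemigroup, it is the largest one. This direction is essentially formal and poses no real difficulty; the entire weight of the theorem rests on the closure argument described above.
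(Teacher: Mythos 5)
Your proposal is correct and follows essentially the same route as the paper: closure is proved by pulling a point of $A\cap X\beta$ back to the $E$-class of its preimage, using that $X\alpha$ meets that class, and pushing forward with the $E$-preservation of $\beta$ (only the forward direction of the $E^*$ hypothesis is actually needed here, contrary to your anticipation), and maximality is the formal observation that a regular element of any regular subsemigroup is a regular element of $T_{E^*}(X)$ and hence lies in $\reg(T)$ by the cited Theorem 3.1 of Deng et al. The only divergence is in producing an inverse inside $\reg(T)$: the paper observes $(\alpha\beta)^2=\alpha\beta$ and invokes Theorem 3.3 of Deng et al.\ to conclude that the inverse $\beta$ already lies in $\reg(T)$, whereas you propose to check that the specific inverse constructed in their Theorem 3.1 meets every $E$-class --- both are workable.
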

\begin{proof}
	Let $\alpha,\beta\in\reg(T)$. We assert that $\alpha\beta\in\reg(T)$. Let $A\in X/E$. Then $A\cap X\beta\neq\emptyset$. There is $a\in A\cap X\beta$ which implies that $a=x\beta$ for some $x\in X$. Moreover, we have $x\in B$ for some $E$-class $B$ and hence there exists $b\in B\cap X\alpha$ since $B\cap X\alpha\neq\emptyset$. Thus $b=z\alpha$ for some $z\in X$. We obtain $(z\alpha,x)=(b,x)\in E$ from which it follows that $(z\alpha\beta,a)=(z\alpha\beta,x\beta)\in E$ and so $z\alpha\beta\in A\cap X\alpha\beta\neq\emptyset$. Hence $\alpha\beta\in\reg(T)$. Therefore, $\reg(T)$ is a subsemigroup of $T_{E^*}(X)$.

	If $\alpha\in\reg(T)$, then $\alpha\beta\alpha=\alpha$ for some $\beta\in T_{E^*}(X)$, so $(\alpha\beta)^2=\alpha\beta$ and hence, by \cite[Theorem 3.3]{Deng}, $\beta\in\reg(T)$. Thus $\reg(T)$ is a regular subsemigroup of $T_{E^*}(X)$. 
\end{proof}

To consider Green's relations on $\reg(T)$, we adopt the following notations introduced by \cite{Howie}. If $U$ is a subsemigroup of a semigroup $S$ and $a,b\in U$, then $(a,b)\in\mathcal{L}^U$ means that there exist $u,v\in U^1$ such that $ua=b$, $vb=a$ while $(a,b)\in\mathcal{L}^S$ means that there exist $s,t\in S^1$ such that $sa=b$, $tb=a$. Obviously,
\[
\mathcal{L}^U\subseteq\mathcal{L}^S\cap(U\times U)
\]
Similarly, we also use the following notations:
\[
\mathcal{R}^U\subseteq\mathcal{R}^S\cap(U\times U),\quad\mathcal{H}^U\subseteq\mathcal{H}^S\cap(U\times U),
\]
\[
\mathcal{D}^U\subseteq\mathcal{D}^S\cap(U\times U),\quad\mathcal{J}^U\subseteq\mathcal{J}^S\cap(U\times U).
\]
Due to the results given by T. E. Hall, it is well-known that if $U$ is a regular subsemigroup of $S$, we have 
\[
\mathcal{L}^U=\mathcal{L}^S\cap(U\times U),\quad\mathcal{R}^U=\mathcal{R}^S\cap(U\times U),\quad\mathcal{H}^U=\mathcal{H}^S\cap(U\times U)
\]
(see Proposition 2.4.2 of \cite{Howie} for details).

For convenience, we write $\mathcal{L}^T=\mathcal{L}^{T_{E^*}(X)}$ and $\mathcal{L}^R=\mathcal{L}^{\reg(T)}$. The same notation applies to $\mathcal{R}^T,\mathcal{R}^R,\mathcal{H}^T,\mathcal{H}^R,\mathcal{D}^T,\mathcal{D}^R,\mathcal{J}^T$ and $\mathcal{J}^R$.

Let $\alpha\in T_{E^*}(X)$, as in \cite{Deng}, the authors defined $Z(\alpha)=\{A\in X/E : A\cap X\alpha=\emptyset\}$. Clearly, if $\alpha\in\reg(T)$, then $Z(\alpha)=\emptyset$.

Refer to the characterizations of Green's relations on $T_{E^*}(X)$ in \cite{Deng}, the authors obtained the following results. For each $\alpha,\beta\in T_{E^*}(X)$,
\begin{enumerate}[(1)]
	\item $(\alpha,\beta)\in\mathcal{L}^T$ if and only if $X\alpha=X\beta$;
	\item $(\alpha,\beta)\in\mathcal{R}^T$ if and only if $\pi(\alpha)=\pi(\beta)$ and $|Z(\alpha)|=|Z(\beta)|$;
	\item $(\alpha,\beta)\in\mathcal{H}^T$ if and only if $\pi(\alpha)=\pi(\beta)\ \text{and}\ X\alpha=X\beta$;
	\item $(\alpha,\beta)\in\mathcal{D}^T$ if and only if $|Z(\alpha)|=|Z(\beta)|$ and there exists $\delta\in T_{E^*}(X)$ such that $\delta|_{X\alpha} : X\alpha \rightarrow X\beta$ is a bijection;
	\item $(\alpha,\beta)\in\mathcal{J}^T$ if and only if $|X\alpha|=|X\beta|$ and there exist $\rho, \tau\in T_{E^*}(X)$ such that for any $A\in X/E$, $A\alpha\subseteq B\beta\rho$ and $A\beta\subseteq C\alpha\tau$ for some $B,C\in X/E$.
\end{enumerate}

Since $\reg(T)$ is a regular subsemigroup of $T_{E^*}(X)$, we can apply the above results with Hall's Theorem to obtain characterizations of Green's relations on $\reg(T)$. We are now in position to state our main results of this section.

\begin{thm}\label{thm: LRH}
	Let $\alpha,\beta\in\reg(T)$. Then
	\begin{enumerate}[(1)]
		\item $(\alpha,\beta)\in\mathcal{L}^R$ if and only if $X\alpha=X\beta$;
		\item $(\alpha,\beta)\in\mathcal{R}^R$ if and only if $\pi(\alpha)=\pi(\beta)$;
		\item $(\alpha,\beta)\in\mathcal{H}^R$ if and only if $\pi(\alpha)=\pi(\beta)\ \text{and}\ X\alpha=X\beta$.
	\end{enumerate}
\end{thm}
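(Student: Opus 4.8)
The plan is to deduce all three characterizations from the already-established fact that $\reg(T)$ is a regular subsemigroup of $T_{E^*}(X)$, combined with Hall's Theorem (Proposition 2.4.2 of \cite{Howie}). Concretely, the first step is simply to record the equalities
\[
\mathcal{L}^R=\mathcal{L}^T\cap(\reg(T)\times\reg(T)),\quad\mathcal{R}^R=\mathcal{R}^T\cap(\reg(T)\times\reg(T)),\quad\mathcal{H}^R=\mathcal{H}^T\cap(\reg(T)\times\reg(T)),
\]
which Hall's Theorem guarantees precisely because $\reg(T)$ is regular (the general inclusions $\subseteq$ hold for any subsemigroup, but regularity upgrades them to equalities). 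This reduces each part to substituting the known $T_{E^*}(X)$-descriptions from \cite{Deng} and then simplifying.

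Next I would treat the three cases in turn, for $\alpha,\beta\in\reg(T)$. For part (1), the description of $\mathcal{L}^T$ says $(\alpha,\beta)\in\mathcal{L}^T\iff X\alpha=X\beta$, a condition referring only to ranges, so intersecting with $\reg(T)\times\reg(T)$ changes nothing and yields $(\alpha,\beta)\in\mathcal{L}^R\iff X\alpha=X\beta$ immediately. Part (3) is identical in spirit: the $\mathcal{H}^T$-condition $\pi(\alpha)=\pi(\beta)$ and $X\alpha=X\beta$ carries over verbatim, giving the stated characterization of $\mathcal{H}^R$.

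The only case with any content is part (2). Here the $\mathcal{R}^T$-description carries the extra clause $|Z(\alpha)|=|Z(\beta)|$ alongside $\pi(\alpha)=\pi(\beta)$. The key observation, which I would state explicitly, is that for every $\alpha\in\reg(T)$ the defining property $A\cap X\alpha\neq\emptyset$ for all $A\in X/E$ means exactly that $Z(\alpha)=\emptyset$ (this was already noted in the excerpt). Consequently, whenever $\alpha,\beta\in\reg(T)$ we have $|Z(\alpha)|=0=|Z(\beta)|$ automatically, so the cardinality clause is vacuously satisfied and drops out. Hence $(\alpha,\beta)\in\mathcal{R}^R\iff\pi(\alpha)=\pi(\beta)$.

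No step presents a genuine obstacle; the proof is essentially bookkeeping. The one point that deserves care is the application of Hall's Theorem: one must verify that its hypothesis (regularity of the subsemigroup) is met, which is exactly the previous theorem, and then observe that it is the \emph{equality} $\mathcal{K}^R=\mathcal{K}^T\cap(\reg(T)\times\reg(T))$ (for $\mathcal{K}\in\{\mathcal{L},\mathcal{R},\mathcal{H}\}$), not merely the inclusion, that makes the argument work. After that, the collapse of the $Z$-condition to a triviality on $\reg(T)$ is the only substantive simplification.
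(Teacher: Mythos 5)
Your proposal is correct and follows exactly the paper's own route: invoke Hall's Theorem (valid since $\reg(T)$ is a regular subsemigroup) to get the equalities $\mathcal{K}^R=\mathcal{K}^T\cap(\reg(T)\times\reg(T))$, substitute the known descriptions of $\mathcal{L}^T,\mathcal{R}^T,\mathcal{H}^T$, and observe that $|Z(\alpha)|=|Z(\beta)|=0$ for regular elements so the cardinality clause in part (2) is vacuous. The paper states this more tersely but the argument is identical.
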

\begin{proof}
	Since $\alpha,\beta\in\reg(T)$, we obtain $|Z(\alpha)|=|Z(\beta)|=0$. It is straightforward to prove (1), (2) and (3) by applying Hall's Theorem.
\end{proof}

The proofs of the below theorems are modified from those in \cite{Deng}.

\begin{thm}\label{thm: D relation}
	Let $\alpha,\beta\in\reg(T)$. Then the following statements are equivalent.
	\begin{enumerate}[(1)]
		\item $(\alpha,\beta)\in\mathcal{D}^R$.
		\item There exists $\delta\in\reg(T)$ such that $\delta|_{X\alpha} : X\alpha \rightarrow X\beta$ is a bijection.
		\item There exists $\delta\in T_{E^*}(X)$ such that $\delta|_{X\alpha} : X\alpha \rightarrow X\beta$ is a bijection.
	\end{enumerate}
\end{thm}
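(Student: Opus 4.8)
The plan is to establish the cycle $(2)\Rightarrow(3)\Rightarrow(1)\Rightarrow(2)$, relying on Theorem~\ref{thm: LRH} and on the decomposition $\mathcal{D}^R=\mathcal{R}^R\circ\mathcal{L}^R$ that holds in any semigroup (here with the relations computed inside $\reg(T)$). The implication $(2)\Rightarrow(3)$ is immediate, since $\reg(T)\subseteq T_{E^*}(X)$ means that any $\delta\in\reg(T)$ witnessing $(2)$ also witnesses $(3)$.

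For $(3)\Rightarrow(1)$, given a witness $\delta\in T_{E^*}(X)$ with $\delta|_{X\alpha}:X\alpha\rightarrow X\beta$ a bijection, I would set $\gamma=\alpha\delta$ and first check that $\gamma\in\reg(T)$. Indeed $\gamma\in T_{E^*}(X)$ because $\alpha,\delta\in T_{E^*}(X)$ and that set is closed under composition, and $X\gamma=(X\alpha)\delta=X\beta$ meets every $E$-class since $\beta\in\reg(T)$. The surjectivity of $\delta|_{X\alpha}$ gives $X\gamma=X\beta$, so $(\gamma,\beta)\in\mathcal{L}^R$ by Theorem~\ref{thm: LRH}(1), while its injectivity forces $\ker\gamma=\ker\alpha$, that is $\pi(\gamma)=\pi(\alpha)$, so $(\alpha,\gamma)\in\mathcal{R}^R$ by Theorem~\ref{thm: LRH}(2). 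Hence $(\alpha,\beta)\in\mathcal{R}^R\circ\mathcal{L}^R\subseteq\mathcal{D}^R$.

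For $(1)\Rightarrow(2)$, from $\mathcal{D}^R=\mathcal{R}^R\circ\mathcal{L}^R$ I would pick $\gamma\in\reg(T)$ with $(\alpha,\gamma)\in\mathcal{R}^R$ and $(\gamma,\beta)\in\mathcal{L}^R$, so that $\pi(\alpha)=\pi(\gamma)$ and $X\gamma=X\beta$ by Theorem~\ref{thm: LRH}. Because $\reg(T)$ contains $1_X$ we have $\reg(T)^1=\reg(T)$, so $(\alpha,\gamma)\in\mathcal{R}^R$ yields $\rho\in\reg(T)$ with $\alpha\rho=\gamma$. I would then take $\delta=\rho$ and verify that $\rho|_{X\alpha}:X\alpha\rightarrow X\beta$ is a bijection: surjectivity follows from $(X\alpha)\rho=X(\alpha\rho)=X\gamma=X\beta$, and injectivity follows from $\pi(\alpha\rho)=\pi(\gamma)=\pi(\alpha)$, which means that for $a=x\alpha$ and $b=y\alpha$ in $X\alpha$ one has $a\rho=b\rho$ iff $x\alpha\rho=y\alpha\rho$ iff $x\alpha=y\alpha$ iff $a=b$.

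I do not expect a genuine obstacle here: the essential content is the combination of $\mathcal{D}^R=\mathcal{R}^R\circ\mathcal{L}^R$ with the $\mathcal{R}^R$ and $\mathcal{L}^R$ descriptions of Theorem~\ref{thm: LRH}, plus the closure remark that $\alpha\delta$ automatically lands in $\reg(T)$ once its range meets every $E$-class. The most delicate bookkeeping step should be translating the kernel equalities $\pi(\alpha\delta)=\pi(\alpha)$ and $\pi(\alpha\rho)=\pi(\alpha)$ into injectivity of the relevant restriction maps, but this is routine.
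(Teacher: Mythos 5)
Your argument is correct, but it takes a genuinely different route from the paper's in both nontrivial implications. For (1)$\Rightarrow$(2) the paper uses the decomposition $\mathcal{D}^R=\mathcal{L}^R\circ\mathcal{R}^R$, picks $\gamma$ with $(\alpha,\gamma)\in\mathcal{L}^R$ and $(\gamma,\beta)\in\mathcal{R}^R$, and then hand-builds a witness $\delta$ piecewise, sending $x\in A\cap X\gamma$ to $x\gamma^{-1}\beta_*$ and the remainder of each $E$-class to a fixed element of $(A\cap X\gamma)\gamma^{-1}\beta$; it must then separately verify that this $\delta$ is $E^*$-preserving, lies in $\reg(T)$, and restricts to a bijection (the last step being quoted from Theorem 2.4 of \cite{Deng}). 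You instead use the opposite factorization $\mathcal{D}^R=\mathcal{R}^R\circ\mathcal{L}^R$, and the element $\rho\in\reg(T)$ with $\alpha\rho=\gamma$ (available because $1_X\in\reg(T)$, so no formal identity is needed) already serves as $\delta$: $(X\alpha)\rho=X\gamma=X\beta$ gives surjectivity, $\ker(\alpha\rho)=\ker\gamma=\ker\alpha$ gives injectivity on $X\alpha$, and membership in $\reg(T)$ comes for free. Likewise, for (3)$\Rightarrow$(1) the paper defines $\gamma$ by $x\gamma=x\beta\delta^{-1}\in X\alpha$, pulling back through the bijection and appealing again to \cite{Deng}, whereas your $\gamma=\alpha\delta$ lies in $T_{E^*}(X)$ by closure and in $\reg(T)$ simply because $X\gamma=X\beta$; the two kernel/range computations then feed directly into Theorem \ref{thm: LRH}. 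What your version buys is economy --- no new piecewise map to construct and no separate regularity or preservation checks --- at the price of a less explicit witness; what the paper's version buys is a concrete $\delta$ whose construction parallels \cite{Deng} closely enough to reuse that verification verbatim. The steps you flag as routine bookkeeping (translating injectivity of $\delta|_{X\alpha}$ into $\ker(\alpha\delta)=\ker\alpha$ and conversely) do check out, so there is no gap.
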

\begin{proof}
	(1)$\Rightarrow$(2). Suppose that $(\alpha,\beta)\in\mathscr{D}^R$. Then $(\alpha,\gamma)\in\mathscr{L}^R$ and $(\gamma,\beta)\in\mathscr{R}^R$ for some $\gamma\in\reg(T)$. By Theorem \ref{thm: LRH} (1) and (2), $X\alpha=X\gamma$ and $\pi(\gamma)=\pi(\beta)$. For each $A\in X/E$, we have $A\cap X\gamma\neq\emptyset$ since $\gamma\in\reg(T)$. Hence, for each $A\in X/E$, we can choose and fix $b_A\in(A\cap X\gamma)\gamma^{-1}\beta$ and define $\delta:X\to X$ by 
	\[
	x\delta =\begin{cases} ~~x\gamma^{-1}\beta_* &  \begin{text}{if $x\in A\cap X\gamma$}\end{text}\\ 
	~~b_A &  \begin{text}{if $x\in A\setminus X\gamma$.}\end{text}\end{cases}
	\]
	The proof that $\delta\in T_{E^*}(X)$ is routine. Next, we will show that $\delta\in\reg(T)$. Let $C\in X/E$. Then there is $a\beta\in X\beta\cap C\neq\emptyset$ since $\beta\in\reg(T)$. Let $a\gamma=c$ for some $c\in X$. We see that $c\in X\gamma$ and then $c\delta=c\gamma^{-1}\beta_*$. Furthermore, we have $a\in c\gamma^{-1}\in\pi(\gamma)=\pi(\beta)$. Hence $c\gamma^{-1}\beta_*=a\beta\in C$ and so $c\delta=a\beta\in C\cap X\delta\neq\emptyset$. Therefore, $\delta\in\reg(T)$. Finally, by the same proof as given in Theorem 2.4 of \cite{Deng}, it is concluded that $\delta|_{X\alpha}$ is bijective.
	
	The implication (2)$\Rightarrow$(3) is clear.
	
	(3)$\Rightarrow$(1). Suppose that there exists $\delta\in T_{E^*}(X)$ such that $\delta|_{X\alpha} : X\alpha \rightarrow X\beta$ is a bijection. Define $\gamma : X \rightarrow X$ by $x\gamma =x\beta\delta^{-1}\in X\alpha$. Then $\gamma\in T_{E^*}(X)$ since $\beta$ and $\delta$ are $E^*$-preserving. We assert that $\gamma\in\reg(T)$. Indeed, let $A\in X/E$. Then $X\gamma=X\beta\delta^{-1}=X\alpha$ and $\delta$ is a bijection. Moreover, since $\alpha\in\reg(T)$, we obtain $A\cap X\alpha$ is nonempty. Thus $A\cap X\gamma=A\cap X\alpha$ is also nonempty from which it follows that $\gamma\in\reg(T)$. Again by using the same proof as given in Theorem 2.4 of \cite{Deng}, we obtain $(\alpha,\gamma)\in\mathscr{L}^R$ and $(\gamma,\beta)\in\mathscr{R}^R$. Consequently, $(\alpha,\beta)\in\mathscr{D}^R$.
\end{proof}

\begin{thm}
	Let $\alpha,\beta\in\reg(T)$. Then the following statements are equivalent.
	\begin{enumerate}[(1)]
		\item $(\alpha,\beta)\in\mathcal{J}^R$.
		\item $|X\alpha|=|X\beta|$ and there exist $\rho, \tau\in\reg(T)$ such that for any $A\in X/E$, $A\alpha\subseteq B\beta\rho$ and $A\beta\subseteq C\alpha\tau$ for some $B,C\in X/E$.
	\end{enumerate}
\end{thm}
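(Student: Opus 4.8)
The plan is to mirror the proof of the corresponding characterization of $\mathcal{J}^T$ from \cite{Deng}, but to upgrade every witness produced there from $T_{E^*}(X)$ to $\reg(T)$. Two observations make this feasible. First, $\reg(T)$ contains $1_X$, so $\reg(T)^1=\reg(T)$; hence $(\alpha,\beta)\in\mathcal{J}^R$ is equivalent to $\alpha\in\reg(T)\,\beta\,\reg(T)$ together with $\beta\in\reg(T)\,\alpha\,\reg(T)$, with no stray identity to track. Second, I will use repeatedly that every $\varphi\in\reg(T)$ induces a bijection $\bar\varphi$ of $X/E$ sending the class of $x$ to the class of $x\varphi$: it is well defined and injective because $\varphi$ is $E^*$-preserving, and surjective because $\varphi$ meets every $E$-class.

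For $(1)\Rightarrow(2)$, I would first note $\mathcal{J}^R\subseteq\mathcal{J}^T\cap(\reg(T)\times\reg(T))$, so the description of $\mathcal{J}^T$ recalled above from \cite{Deng} immediately yields $|X\alpha|=|X\beta|$. Writing $\alpha=u\beta v$ with $u,v\in\reg(T)$, fix an $E$-class $A$; since $u$ is $E^*$-preserving, $Au$ lies inside a single $E$-class $B$, whence $A\alpha=(Au)\beta v\subseteq B\beta v$. Taking $\rho=v\in\reg(T)$ gives the required inclusion, and the symmetric computation from $\beta=s\alpha t$ yields $\tau=t\in\reg(T)$.

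The substance of the theorem is $(2)\Rightarrow(1)$, and the main obstacle is precisely that the map witnessing $\alpha\in\reg(T)\beta\reg(T)$ must itself be regular, not merely $E^*$-preserving. Put $\gamma=\beta\rho$, which lies in $\reg(T)$ since $\reg(T)$ is a subsemigroup. For each $E$-class $A$ the hypothesis gives $A\alpha\subseteq B_A\gamma$ for some $E$-class $B_A$. The key step is to show that $A\mapsto B_A$ is a bijection of $X/E$: since $A\alpha$ is contained in the single class $A\bar\alpha$, while $A\alpha\subseteq B_A\gamma$ and $B_A\gamma$ sits inside the class $B_A\bar\gamma$, these two classes coincide, giving $A\bar\alpha=B_A\bar\gamma$ and hence $B_A=A\bar\alpha\,\bar\gamma^{-1}$, a composite of the two bijections above. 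Granting this, I would define $\lambda$ by choosing, for $x\in A$, any $x\lambda\in(x\alpha)\gamma^{-1}$; this set is nonempty (as $x\alpha\in B_A\gamma\subseteq X\gamma$) and in fact contained in $B_A$ (the $\gamma$-preimage of the class $A\bar\alpha$ is exactly $B_A$). Then $\alpha=\lambda\gamma=\lambda\beta\rho$ by construction; $\lambda$ is $E^*$-preserving because $A\lambda\subseteq B_A$ with $A\mapsto B_A$ injective; and $\lambda\in\reg(T)$ because $A\mapsto B_A$ is onto, so every $E$-class is some $B_A$ and is met by the nonempty set $A\lambda\subseteq B_A$. This places $\alpha$ in $\reg(T)\beta\reg(T)$. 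Running the same construction with the roles of $\alpha,\beta$ swapped and $\tau$ in place of $\rho$ gives $\beta\in\reg(T)\alpha\reg(T)$, and hence $(\alpha,\beta)\in\mathcal{J}^R$.

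The one point I would flag as needing care is the bijectivity of $A\mapsto B_A$: this is exactly where the hypothesis $\alpha,\beta,\rho\in\reg(T)$ (rather than merely in $T_{E^*}(X)$) is used, and it is what converts an $E^*$-preserving witness into a regular one. Notably, I do not expect to need the cardinality condition $|X\alpha|=|X\beta|$ in $(2)\Rightarrow(1)$; it is recorded in the statement because it is forced by $\mathcal{J}^R\subseteq\mathcal{J}^T$ and is what makes the two conditions line up with the known $\mathcal{J}^T$ description.
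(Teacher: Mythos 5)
Your proof is correct and follows essentially the same route as the paper: your witness $\lambda$ with $x\lambda\in(x\alpha)(\beta\rho)^{-1}$ is exactly the paper's $\theta$ with $x\theta\in x\alpha\rho^{-1}\beta^{-1}$, and in both cases the substance of $(2)\Rightarrow(1)$ is verifying that this witness lies in $\reg(T)$. Your packaging of that verification via the induced bijection $A\mapsto B_A$ of $X/E$ (rather than the paper's pointwise element chase) is a tidier presentation of the same argument, and your explicit treatment of $(1)\Rightarrow(2)$, which the paper omits as routine, is also fine.
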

\begin{proof}
	The proof is an appropriate modification of the proof of Theorem 2.5 of \cite{Deng}. We will show the proof of (2)$\Rightarrow$(1) only. Define a function $\theta:X\to X$ by $x\theta\in x\alpha\rho^{-1}\beta^{-1}$. By the same proof as given in Theorem 2.5 of \cite{Deng}, we have $\theta\in T_{E^*}(X)$ and $\theta\beta\rho=\alpha$. It remains to show that $\theta\in\reg(T)$. Let $A\in X/E$. Then $A\beta\rho\subseteq B$ for some $E$-class $B$. Since $\alpha\in\reg(T)$, we obtain $B\cap X\alpha$ is nonempty. Then there exists $b\in B\cap X\alpha$ and hence $b=c\alpha$ for some $c\in X$. Assume that $c\in C$ for some $E$-class $C$. By assumption, $C\alpha\subseteq D\beta\rho$ for some $D\in X/E$ which implies that $c\alpha=d\beta\rho$ for some $d\in D$. Thus $d\beta\rho=c\alpha\in B$ from which it follows that $d\in A$ since $A\beta\rho\subseteq B$ and $\beta\rho$ is $E^*$-preserving. We obtain $A=D$ and so $C\alpha\subseteq A\beta\rho$. By the definition of $\theta$, we have $c\theta\in c\alpha\rho^{-1}\beta^{-1}$. Hence $c\theta\beta\rho=c\alpha\in C\alpha\subseteq A\beta\rho$ from which it follows that $c\theta\in A$. Therefore, $c\theta\in A\cap X\theta\neq\emptyset$ and so $\theta\in\reg(T)$, as required. Similarly, we can construct a function $\mu\in\reg(T)$ such that $\mu\alpha\tau=\beta$. Therefore, $(\alpha,\beta)\in\mathscr{J}^R$.
\end{proof}

\section{Ideals}\label{sec: Ideals}

In this section, we focus our attention on (two-sided) ideals of $\reg(T)$ and find the smallest ideal of it. Let $X/E=\{A_i:i\in I\}$. We note that for each $\beta\in\reg(T)$ and $i\in I$, we obtain $A_i\beta\subseteq A_j$ for some $j\in I$. Define a function $\tau:I\to I$ by $i\mapsto j$. It is straightforward to verify that $\tau$ is a permutation on $I$ since $\beta$ is $E^*$-preserving and regular.

To describe ideals of $\reg(T)$, the following lemma is needed.

\begin{lem}\label{lem: pre ideal}
	Let $X/E=\{A_i:i\in I\}$ and $\alpha,\beta\in\reg(T)$. Then the following statements are equivalent.
	\begin{enumerate}
		\item There exists a permutation $\rho$ on the index set $I$ such that $|A_i\alpha|\leq|A_{i\rho}\beta|$ for all $i\in I$.
		\item $\alpha=\lambda\beta\mu$ for some $\lambda,\mu\in\reg(T)$.
	\end{enumerate}
\end{lem}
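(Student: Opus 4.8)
The plan is to prove the two implications separately, with the forward direction (2)$\Rightarrow$(1) being routine and the converse (1)$\Rightarrow$(2) carrying the real content. Throughout I will use the observation that a transformation $\gamma$ lies in $\reg(T)$ precisely when it maps each $E$-class into a single $E$-class and the induced map on the index set $I$ is a \emph{permutation}: this is just a restatement of $E^*$-preservation (block-to-block with injective index map) together with regularity (surjective index map), so it reduces membership in $\reg(T)$ to a bookkeeping check on blocks. For $\gamma\in\reg(T)$ I write $\tau_\gamma$ for this induced permutation of $I$, so that $A_i\gamma\subseteq A_{i\tau_\gamma}$ for all $i$.

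For (2)$\Rightarrow$(1), suppose $\alpha=\lambda\beta\mu$ with $\lambda,\mu\in\reg(T)$ and set $\rho=\tau_\lambda$. Then $A_i\alpha=((A_i\lambda)\beta)\mu$, and since $A_i\lambda\subseteq A_{i\rho}$ we get $A_i\lambda\beta\subseteq A_{i\rho}\beta$. As applying the function $\mu$ cannot increase cardinality, $|A_i\alpha|\le|A_i\lambda\beta|\le|A_{i\rho}\beta|$, so $\rho$ is the desired permutation.

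For (1)$\Rightarrow$(2) I would construct $\lambda$ and $\mu$ explicitly, aiming for $\tau_\lambda=\rho$ and $\tau_\mu=\tau_\beta^{-1}\rho^{-1}\tau_\alpha$ so that the three induced permutations compose to $\tau_\alpha$. For each $i$ the hypothesis $|A_i\alpha|\le|A_{i\rho}\beta|$ yields an injection $\phi_i\colon A_i\alpha\to A_{i\rho}\beta$; I also fix a section $\kappa_i$ of $\beta$ over $A_{i\rho}$, i.e.\ $v\kappa_i\in A_{i\rho}$ with $(v\kappa_i)\beta=v$ for each $v\in A_{i\rho}\beta$. Define $\lambda$ on $A_i$ by $x\lambda=(\phi_i(x\alpha))\kappa_i\in A_{i\rho}$, so that $x\lambda\beta=\phi_i(x\alpha)$, and define $\mu$ on the block $A_{i\rho\tau_\beta}\supseteq A_{i\rho}\beta$ by $w\mu=\phi_i^{-1}(w)$ for $w\in\phi_i(A_i\alpha)$ and $w\mu$ an arbitrarily fixed point of $A_{i\tau_\alpha}$ otherwise. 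Then for $x\in A_i$ we obtain $x\lambda\beta\mu=\phi_i(x\alpha)\mu=\phi_i^{-1}(\phi_i(x\alpha))=x\alpha$, so $\lambda\beta\mu=\alpha$.

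It remains to confirm $\lambda,\mu\in\reg(T)$, and here I expect the only real care to lie in the bookkeeping rather than in any deep argument: $\lambda$ sends each $A_i$ into the single block $A_{i\rho}$ with index map the permutation $\rho$, and $\mu$ sends each block $A_{i\rho\tau_\beta}$ into the single block $A_{i\tau_\alpha}$ with index map the permutation $\tau_\beta^{-1}\rho^{-1}\tau_\alpha$; by the observation above both are therefore $E^*$-preserving and regular. The points to verify carefully are that $\lambda$ and $\mu$ are well defined and total (which holds because $\phi_i$, $\kappa_i$ and $\phi_i^{-1}$ are applied only where defined, and because the blocks $A_{i\rho\tau_\beta}$ exhaust $X/E$ as $i$ ranges over $I$, since $\rho\tau_\beta$ is a permutation), and that the sets $A_{i\rho}\beta\subseteq A_{i\rho\tau_\beta}$ are pairwise disjoint, so the piecewise definition of $\mu$ is unambiguous. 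The main obstacle is thus simply organising the three index permutations so that $\lambda\beta\mu$ reproduces $\alpha$ block by block while keeping $\lambda$ and $\mu$ inside $\reg(T)$; the cardinality hypothesis is used exactly once, to produce the injections $\phi_i$.
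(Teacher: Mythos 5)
Your proposal is correct and follows essentially the same route as the paper: the easy direction is the same cardinality chain, and for (1)$\Rightarrow$(2) your $\lambda$ (compose $\alpha$ with the injections $\phi_i$ and a choice of $\beta$-preimages in $A_{i\rho}$) and your $\mu$ (the inverses $\phi_i^{-1}$ extended by a constant on the rest of each block) are exactly the paper's $\lambda=\alpha\sigma\gamma$ and $\mu=\bigcup_i\mu_i$. Your reduction of membership in $\reg(T)$ to ``block-to-block with a permutation index map'' is a correct and slightly cleaner packaging of the verifications the paper carries out by hand.
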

\begin{proof}
	(1) $\Rightarrow$ (2). Assume that there exists a permutation $\rho$ on the index set $I$ such that $|A_i\alpha|\leq|A_{i\rho}\beta|$ for all $i\in I$. Then, for each $i\in I$, there is an injection $\sigma_i:A_i\alpha\to A_{i\rho}\beta$. Define a function $\sigma:X\alpha\to X\beta$ by $\sigma=\bigcup_{i\in I}\sigma_i$. This function is well-defined and one-to-one since $\alpha$ and $\beta$ are $E^*$-preserving.
	
	As mentioned above, there is a permutation $\tau$ on $I$ such that $A_{i\rho}\beta\subseteq A_{i\tau}$ for all $i\in I$. For each $i\in I$, choose an element $y_0\in A_i\alpha$ and define a surjective function $\mu_i:A_{i\tau}\to A_i\alpha$ by
	\[
	x\mu_i =\begin{cases} ~~x\sigma_i^{-1} &  \begin{text}{if $x\in A_{i\rho}\beta$}\end{text}\\ 
	~~y_0 &  \begin{text}{if $x\in A_{i\tau}\setminus A_{i\rho}\beta$.}\end{text}\end{cases}
	\]
	Since $\beta$ is regular and $E^*$-preserving, we obtain $\bigcup_{i\in I}A_{i\tau}=X$ and $A_{i\tau}\cap A_{j\tau}=\emptyset$ for each distinct indices $i$ and $j$ in $I$. Now, we define a function $\mu:X\to X$ by $\mu=\bigcup_{i\in I}\mu_i$. We note that $X\mu=\bigcup_{i\in I} A_i\alpha=X\alpha$.

	Let $(x,y)\in E$. Then $x,y\in A_{i\tau}$ for some $i\in I$. We have $x\mu=x\mu_i\in A_i\alpha$ and $y\mu=y\mu_i\in A_i\alpha$ from which it follows that $(x\mu,y\mu)\in E$. On the other hand, let $(x\mu,y\mu)\in E$. If $x\mu\in A_i\alpha$ and $y\mu\in A_j\alpha$ for some $i,j\in I$, then $i=j$ since $\alpha$ is $E^*$-preserving. Hence $x,y\in A_{i\tau}$ and so $(x,y)\in E$. Therefore, $\mu\in T_{E^*}(X)$. Moreover, since $\alpha$ is regular, we have $A_i\cap X\mu=A_i\cap X\alpha\neq\emptyset$ for each $i\in I$ which implies that $\mu$ is regular.

	For each $x\in\im\sigma$, choose an element $x'\in x\beta^{-1}$ (possible since $\im\sigma\subseteq X\beta$). Define functions $\gamma:\im\sigma\to X$ and $\lambda:X\to X$ by $x\gamma=x'$ and $\lambda=\alpha\sigma\gamma$, respectively. We claim that $A_i\lambda\subseteq A_{i\rho}$ for all $i\in I$. For, let $i\in I$ and $x\in A_i$. Then $x\alpha\in A_i\alpha$ and hence $x\alpha\sigma=x\alpha\sigma_i\in A_{i\rho}\beta$. So $x\lambda=x\alpha\sigma\gamma\in(x\alpha\sigma)\beta^{-1}\subseteq(A_{i\rho}\beta)\beta^{-1}=A_{i\rho}$. It is concluded that $\lambda$ is $E^*$-preserving and regular.

	Finally, we show that $\alpha=\lambda\beta\mu$. Let $x\in X$. Then $x\in A_i$ for some $i\in I$ and $x\alpha\in A_i\alpha$. We have $x\alpha\sigma=x\alpha\sigma_i=y$ for some $y\in A_{i\rho}\beta$. By the definition of $\mu$, we can see that $y\mu=y\sigma_i^{-1}$. Hence $x\lambda=x\alpha\sigma\gamma=y\gamma=y'\in y\beta^{-1}$ which implies that
	\[
	x\lambda\beta\mu=y'\beta\mu=y\mu=y\sigma_i^{-1}=(x\alpha\sigma_i)\sigma_i^{-1}=x\alpha.
	\]
	
	(2) $\Rightarrow$ (1). Let $\alpha=\lambda\beta\mu$ for some $\lambda,\mu\in\reg(T)$. Then there exists a permutation $\rho$ on $I$ such that $A_i\lambda\subseteq A_{i\rho}$ for all $i\in I$. Therefore,
	\[
	|A_i\alpha|=|A_i\lambda\beta\mu|\leq|A_{i\rho}\beta\mu|\leq|A_{i\rho}\beta|
	\]
	for all $i\in I$, as required.
\end{proof}

For each cardinal number $r$, we let $r'$ denote the successor of $r$. Let $X/E=\{A_i:i\in I\}$ and let $\mathbf{r}=\{r_i\}_{i\in I}$ be a set of cardinal numbers such that $2\leq r_i\leq|A_i|'$ for all $i\in I$. Denote the symmetric group on the index set $I$ by $S_I$, we then define
\[
Q(\mathbf{r})=\{\alpha\in\reg(T):\exists\sigma\in S_I,\forall i\in I[|A_i\alpha|<r_{i\sigma}]\}.
\]
Moreover, we assert that $Q(\mathbf{r})$ is an ideal of $\reg(T)$. For, let $\alpha\in Q(\mathbf{r})$ and $\beta\in\reg(T)$. Then there exists a permutation $\sigma$ on $I$ such that $|A_i\alpha|<r_{i\sigma}$ for all $i\in I$. As we mentioned before, there is a permutation $\tau$ on $I$ such that $A_i\beta\subseteq A_{i\tau}$ for all $i\in I$. We obtain $|A_i\alpha\beta|\leq|A_i\alpha|<r_{i\sigma}$ and $|A_i\beta\alpha|\leq|A_{i\tau}\alpha|<r_{i\tau\sigma}$ for all $i\in I$. Then $\alpha\beta,\beta\alpha\in Q(\mathbf{r})$ which implies that $Q(\mathbf{r})$ is an ideal of $\reg(T)$.

In the next theorem, we shall use the symbols $Q(\mathbf{r}^\alpha)$ and $\{r^\alpha_i\}_{i\in I}$. The superscript $\alpha$ in $\mathbf{r}^\alpha$ and $r^\alpha_i$ does not mean that $\mathbf{r}$ and $r_i$ are raised to the power $\alpha$, but is simply an index to distinguish among variables.

\begin{thm}\label{thm: ideal}
	Let $X/E=\{A_i:i\in I\}$. The ideals of $\reg(T)$ are precisely the set
	\[
	\bigcup_{\alpha\in J}Q(\mathbf{r}^\alpha)
	\]
	for some index set $J$ and for each $\alpha\in J$, $\mathbf{r}^\alpha=\{r^\alpha_i\}_{i\in I}$ is a set of cardinal numbers defined as above. Furthermore, $\bigcup_{\alpha\in J}Q(\mathbf{r}^\alpha)$ is principal if $J=\{\alpha\}$ is a singleton and $\mathbf{r}^\alpha=\{r^\alpha_i\}_{i\in I}$ such that $r^\alpha_i=s'_i$ where $1\leq s_i\leq|A_i|$ for all $i\in I$.
\end{thm}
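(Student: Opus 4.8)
The plan is to prove the set-equality in two inclusions and then verify the stated sufficient condition for principality, with the whole argument resting on Lemma~\ref{lem: pre ideal} together with the elementary fact that for cardinals $c,d$ one has $c<d'$ if and only if $c\le d$. The easy inclusion is immediate: each $Q(\mathbf{r}^\alpha)$ was already shown above to be an ideal of $\reg(T)$, and an arbitrary union of ideals of a semigroup is again an ideal, so every set $\bigcup_{\alpha\in J}Q(\mathbf{r}^\alpha)$ is an ideal.

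For the converse I would use that every ideal is the union of the principal ideals of its members. Let $K$ be an ideal of $\reg(T)$. Since $1_X\in\reg(T)$, the principal ideal generated by $\beta\in\reg(T)$ is $\reg(T)\beta\reg(T)=\{\lambda\beta\mu:\lambda,\mu\in\reg(T)\}$, whence $K=\bigcup_{\beta\in K}\reg(T)\beta\reg(T)$. The heart of the proof is to recognise each such principal ideal as a set $Q(\mathbf{r}^\beta)$. Given $\beta$, set $s^\beta_i=|A_i\beta|$; because $A_i$ is nonempty and $A_i\beta$ is its image under a function, $1\le s^\beta_i\le|A_i|$, so $r^\beta_i:=(s^\beta_i)'$ satisfies $2\le r^\beta_i\le|A_i|'$ and $\mathbf{r}^\beta$ is admissible. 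By Lemma~\ref{lem: pre ideal}, $\alpha\in\reg(T)\beta\reg(T)$ iff there is $\rho\in S_I$ with $|A_i\alpha|\le|A_{i\rho}\beta|=s^\beta_{i\rho}$ for all $i$; applying $c<d'\iff c\le d$ turns this into the condition $|A_i\alpha|<r^\beta_{i\rho}$ that defines $Q(\mathbf{r}^\beta)$. Hence $\reg(T)\beta\reg(T)=Q(\mathbf{r}^\beta)$, and taking $J=K$ yields $K=\bigcup_{\beta\in K}Q(\mathbf{r}^\beta)$. This middle step is the only delicate point: one must match the weak inequality with a permutation coming from Lemma~\ref{lem: pre ideal} to the strict inequality with a permutation appearing in the definition of $Q(\mathbf{r})$, which is exactly where passing to successor cardinals, and checking the admissibility bounds $2\le r^\beta_i\le|A_i|'$, is needed.

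For the final claim I would run this identity in reverse. Given a singleton $J=\{\alpha\}$ with $r^\alpha_i=s'_i$ and $1\le s_i\le|A_i|$, I build a witness $\beta\in\reg(T)$ whose principal ideal is $Q(\mathbf{r}^\alpha)$: for each $i$ pick $B_i\subseteq A_i$ with $|B_i|=s_i$ and let $\beta|_{A_i}$ be any surjection $A_i\to B_i$, then set $\beta=\bigcup_{i\in I}\beta|_{A_i}$. Each $A_i\beta=B_i\subseteq A_i$ is nonempty, so $\beta$ carries every $E$-class into itself; the verifications that $\beta$ is $E^*$-preserving and regular are then routine, giving $\beta\in\reg(T)$. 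Since $|A_i\beta|=s_i$ for all $i$, the computation of the previous paragraph gives $Q(\mathbf{r}^\alpha)=\reg(T)\beta\reg(T)$, a principal ideal, as claimed.
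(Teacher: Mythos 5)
Your proposal is correct and follows essentially the same route as the paper: both directions rest on Lemma~\ref{lem: pre ideal} together with taking $r^\alpha_i$ to be the successor of $|A_i\alpha|$, and your witness for principality is the same block-wise surjection the paper constructs. Your packaging of the converse as ``every ideal is a union of principal ideals, and $\reg(T)\beta\reg(T)=Q(\mathbf{r}^\beta)$'' is just a tidier arrangement of the paper's two inclusions $J\subseteq\bigcup_{\alpha\in J}Q(\mathbf{r}^\alpha)\subseteq J$, with the added benefit that the ``furthermore'' clause falls out immediately.
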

\begin{proof}
	Obviously, $\bigcup_{\alpha\in J}Q(\mathbf{r}^\alpha)$ is a union of ideals of $\reg(T)$ which is also an ideal. Conversely, suppose that $J$ is an ideal of $\reg(T)$. For each $\alpha\in J$, let $\mathbf{r^\alpha}=\{r^\alpha_i\}_{i\in I}$ be such that $r^\alpha_i$ is the least cardinal greater than $|A_i\alpha|$ for all $i\in I$. We can see that for each $\alpha\in J$, $\alpha\in Q(\mathbf{r}^\alpha)$ (respect to the identity function as a permutation on $I$). Then $J\subseteq\bigcup_{\alpha\in J}Q(\mathbf{r}^\alpha)$. Let $\beta\in\bigcup_{\alpha\in J}Q(\mathbf{r}^\alpha)$. Then $\beta\in Q(\mathbf{r}^\alpha)$ for some $\alpha\in J$. There exists permutation $\tau$ on $I$ such that $|A_i\beta|=s_i<r^\alpha_{i\tau}$ for all $i\in I$. Let $\rho$ be the inverse function of $\tau$. Suppose to the contrary that $|A_i\alpha|<s_{i\rho}$ for some $i\in I$. We obtain
	\[
	|A_i\alpha|<s_{i\rho}<r^\alpha_{i\rho\tau}=r^\alpha_i
	\]
	which contradicts to the choice of $r^\alpha_i$. Hence $|A_i\alpha|\geq s_{i\rho}=|A_{i\rho}\beta|$ for all $i\in I$. By Lemma \ref{lem: pre ideal}, there are $\lambda,\mu\in\reg(T)$ such that $\beta=\lambda\alpha\mu$. Since $\alpha\in J$ and $J$ is an ideal, we get $\beta=\lambda\alpha\mu\in J$ and so $\bigcup_{\alpha\in J}Q(\mathbf{r}^\alpha)\subseteq J$.
	
	Next, we prove the second part of the theorem. Assume that $J=\{\alpha\}$ is a singleton and $\mathbf{r}^\alpha=\{r^\alpha_i\}_{i\in I}$ such that $r^\alpha_i=s'_i$ where $1\leq s_i\leq|A_i|$. Then $\bigcup_{\alpha\in J}Q(\mathbf{r}^\alpha)=Q(\mathbf{r}^\alpha)$. For each $i\in I$, since $1\leq s_i\leq|A_i|$, there is a function $\gamma_i:A_i\to A_i$ such that $|A_i\gamma_i|=s_i$. Define $\gamma:X\to X$ by $\gamma=\bigcup_{i\in I}\gamma_i$. It is clear that $\gamma$ is $E^*$-preserving and regular. Moreover, we obtain $\gamma\in Q(\mathbf{r}^\alpha)$ since $|A_i\gamma|=|A_i\gamma_i|=s_i<s'_i=r^\alpha_i$ for all $i\in I$ (respect to the identity function as a permutation on $I$). We assert that $Q(\mathbf{r}^\alpha)=\reg(T)\gamma\reg(T)$. For, let $\beta\in Q(\mathbf{r}^\alpha)$. Then there exists a permutation $\sigma$ on $I$ such that $|A_i\beta|<r^\alpha_{i\sigma}=s'_{i\sigma}$ for all $i\in I$. We obtain $|A_i\beta|\leq s_{i\sigma}=|A_{i\sigma}\gamma|$ for all $i\in I$. Hence $\beta=\lambda\gamma\mu$ for some $\lambda,\mu\in\reg(T)$ by Lemma \ref{lem: pre ideal}. So $\beta\in\reg(T)\gamma\reg(T)$ which implies that $Q(\mathbf{r}^\alpha)\subseteq\reg(T)\gamma\reg(T)$. The other containment is clear since $\gamma\in Q(\mathbf{r}^\alpha)$ and $Q(\mathbf{r}^\alpha)$ is an ideal. It follows that $\bigcup_{\alpha\in J}Q(\mathbf{r}^\alpha)=Q(\mathbf{r}^\alpha)$ is principal.
\end{proof}

The following theorem demonstrates a characterization of principal ideals of $\reg(T)$ when the set of all equivalence classes, $X/E$, is finite.

\begin{thm}
	Let $X/E=\{A_i:i\in I\}$ such that $I$ is finite. If $\bigcup_{\alpha\in J}Q(\mathbf{r}^\alpha)$ is a principal ideal, then $J=\{\alpha\}$ is a singleton and $\mathbf{r}^\alpha=\{r^\alpha_i\}_{i\in I}$ such that $r^\alpha_i=s'_i$ where $1\leq s_i\leq|A_i|$ for all $i\in I$.
\end{thm}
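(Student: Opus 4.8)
The plan is to reduce everything to a single generator. Since a principal ideal is determined by one element and, by Lemma \ref{lem: pre ideal}, membership in the ideal generated by an element $\gamma$ depends only on the cardinalities $|A_i\gamma|$ of the images of the $E$-classes, I would show that the ideal $\mathcal{I}=\bigcup_{\alpha\in J}Q(\mathbf{r}^\alpha)$ coincides with $Q(\mathbf{s}')$ for the single vector $\mathbf{s}=\{s_i\}_{i\in I}$ read off from a generator. This is exactly the required form: a singleton index set together with successor cardinals.

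Concretely, I would first note that $\reg(T)$ is a monoid, since it contains $1_X$, so a principal ideal has the form $\reg(T)\gamma\reg(T)$ and its generator $\gamma$ lies in it; as $\mathcal{I}=\bigcup_{\alpha\in J}Q(\mathbf{r}^\alpha)$, such a $\gamma\in\reg(T)$ is available. Setting $s_i=|A_i\gamma|$, the regularity and $E^*$-preservation of $\gamma$ force each $A_i\gamma$ to be a nonempty subset of one $E$-class, so $1\le s_i\le|A_i|$ and hence $\mathbf{s}'=\{s_i'\}_{i\in I}$ is an admissible parameter, satisfying $2\le s_i'\le|A_i|'$. The heart of the argument is the identity $\reg(T)\gamma\reg(T)=Q(\mathbf{s}')$: if $\beta=\lambda\gamma\mu$ with $\lambda,\mu\in\reg(T)$, Lemma \ref{lem: pre ideal} produces a permutation $\rho\in S_I$ with $|A_i\beta|\le|A_{i\rho}\gamma|=s_{i\rho}$, which is the same as $|A_i\beta|<s_{i\rho}'$ and hence says $\beta\in Q(\mathbf{s}')$; conversely, starting from $\beta\in Q(\mathbf{s}')$ one reads the successor inequalities back as $|A_i\beta|\le s_{i\rho}=|A_{i\rho}\gamma|$ and applies the implication (1)$\Rightarrow$(2) of the lemma. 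Taking $J=\{\gamma\}$ and $r_i^\gamma=s_i'$ with $1\le s_i\le|A_i|$ then gives the claimed representation.

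The step I expect to require the most care, and where the hypothesis that $I$ is finite should be used, is justifying that one generator really exhausts the ideal, i.e. that the union over $J$ collapses to a single $Q(\mathbf{s}')$ with all exponents genuine successors rather than limit cardinals. Passing to a generator makes this collapse automatic, but to argue purely at the level of the profiles $\{(|A_i\alpha|)_{i\in I}:\alpha\in\mathcal{I}\}$ one needs these profiles to admit a largest element under domination-up-to-permutation; for finite $I$ the cardinals in each profile can be sorted and compared coordinatewise, so mutual domination forces equality of the underlying multisets and a top profile exists, whereas for infinite $I$ this comparison is delicate. Once the top profile is in hand, the remaining equivalence between the cardinal inequalities $|A_i\beta|\le s_{i\rho}$ and the strict successor inequalities $|A_i\beta|<s_{i\rho}'$ in the definition of $Q(\cdot)$ is routine, and a limit value of some $r_i$ is impossible, since it would exclude the generator's own profile and thereby shrink the ideal below the principal one.
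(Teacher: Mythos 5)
Your first half is sound and essentially matches the paper: since $1_X\in\reg(T)$, the generator $\gamma$ of a principal ideal lies in it, $\gamma\in Q(\mathbf{r}^{\alpha_0})$ for some $\alpha_0\in J$ forces $\bigcup_{\alpha\in J}Q(\mathbf{r}^{\alpha})=Q(\mathbf{r}^{\alpha_0})=\reg(T)\gamma\reg(T)$, and Lemma \ref{lem: pre ideal} together with the equivalence $\kappa\leq\mu\Leftrightarrow\kappa<\mu'$ identifies $\reg(T)\gamma\reg(T)$ with $Q(\mathbf{s}')$ for $s_i=|A_i\gamma|$. But this only produces \emph{some} representation of the ideal in the desired form, whereas the theorem is a statement about the \emph{given} parameters: it asserts that each $r^{\alpha_0}_i$ is itself a successor. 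Bridging ``the ideal equals $Q(\mathbf{s}')$'' and ``$\mathbf{r}^{\alpha_0}$ consists of successors'' is the actual content of the theorem, and your justification for it is the gap: you say a limit value of some $r_i$ ``would exclude the generator's own profile and thereby shrink the ideal below the principal one,'' which is backwards. An oversized entry neither excludes $\gamma$ nor shrinks $Q(\mathbf{r}^{\alpha_0})$; it makes $Q(\mathbf{r}^{\alpha_0})$ \emph{too large} to coincide with $\reg(T)\gamma\reg(T)$.

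The correct argument --- and the step where finiteness of $I$ is genuinely used --- goes as follows. From $\gamma\in Q(\mathbf{r}^{\alpha_0})$ take $\sigma$ with $s_i<r^{\alpha_0}_{i\sigma}$ for all $i$. If some $r^{\alpha_0}_{i_0\sigma}$ is not the successor of $s_{i_0}$, choose a cardinal $r$ with $s_{i_0}<r<r^{\alpha_0}_{i_0\sigma}\leq|A_{i_0\sigma}|'$ (so $r\leq|A_{i_0\sigma}|$) and build $\beta$ with $A_{i\sigma}\beta\subseteq A_{i\sigma}$, $|A_{i_0\sigma}\beta|=r$, and $|A_{i\sigma}\beta|=s_i$ for $i\neq i_0$. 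Then $\beta\in Q(\mathbf{r}^{\alpha_0})$, yet $\beta\notin\reg(T)\gamma\reg(T)$: membership would, by Lemma \ref{lem: pre ideal}, yield a permutation $\rho$ with $|A_j\beta|\leq|A_{j\rho}\gamma|$ for all $j$, and for finite $I$ the multiset $\{|A_j\beta|\}_{j\in I}$ has exactly one more entry $\geq r$ than $\{s_j\}_{j\in I}$ does, which is incompatible with such a domination (the paper reaches the same contradiction by chasing the cycle of $i_0$ under $\sigma\rho$). This contradicts $Q(\mathbf{r}^{\alpha_0})=\reg(T)\gamma\reg(T)$ and forces $r^{\alpha_0}_{i\sigma}=s_i'$ for all $i$. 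Your remark about sorting finite profiles is precisely the right tool for this comparison, but as written it is attached to the wrong conclusion, and the decisive construction of the witness $\beta$ is absent; note also that what must be excluded is not only limit values of $r_i$ but any value exceeding the relevant successor.
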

\begin{proof}	
	Suppose that $\bigcup_{\alpha\in J}Q(\mathbf{r}^\alpha)=\reg(T)\gamma\reg(T)$ for some $\gamma\in\bigcup_{\alpha\in J}Q(\mathbf{r}^\alpha)$. Then $\gamma\in Q(\mathbf{r}^\alpha)$ for some $\alpha\in J$ from which it follows that $\reg(T)\gamma\reg(T)\subseteq Q(\mathbf{r}^\alpha)$ since $Q(\mathbf{r}^\alpha)$ is an ideal. Hence
	\[
	\bigcup_{\alpha\in J}Q(\mathbf{r}^\alpha)=\reg(T)\gamma\reg(T)\subseteq Q(\mathbf{r}^\alpha)\subseteq\bigcup_{\alpha\in J}Q(\mathbf{r}^\alpha)
	\]
	which implies that $\bigcup_{\alpha\in J}Q(\mathbf{r}^\alpha)=\reg(T)\gamma\reg(T)=Q(\mathbf{r}^\alpha)$ and so $J=\{\alpha\}$ is a singleton. Let $|A_i\gamma|=t_i$ for all $i\in I$. Since $\gamma\in Q(\mathbf{r}^\alpha)$, there is a permutation $\sigma$ on $I$ such that $t_i<r^\alpha_{i\sigma}$ for all $i\in I$. Let $i\in I$. We can see that $|A_i\gamma|=t_i<r^\alpha_{i\sigma}\leq|A_{i\sigma}|'$ and hence $t_i\leq|A_{i\sigma}|$. Then there is a subset $B_{i\sigma}$ of $A_{i\sigma}$ such that $|B_{i\sigma}|=t_i$. Let $\phi_{i\sigma}$ be a surjection from $A_{i\sigma}$ to $B_{i\sigma}$ for all $i\in I$. Suppose to a contrary that there exists $i_0\in I$ and a cardinal number $r$ such that 
	\[
	|A_{i_0}\gamma|=t_{i_0}<r<r^\alpha_{i_0\sigma}\leq|A_{i_0\sigma}|'.
	\]
	Then $r\leq|A_{i_0\sigma}|$. We can choose a subset $B_{i_0\sigma}$ of $A_{i_0\sigma}$, where $|B_{i_0\sigma}|=r$. Let $\phi_{i_0\sigma}$ be a surjection from $A_{i_0\sigma}$ to $B_{i_0\sigma}$. Define 
	\[
	x\beta =\begin{cases} ~~x\phi_{i\sigma} &  \begin{text}{if $x\in A_{i\sigma}$ and $i\neq i_0$}\end{text}\\ 
	~~x\phi_{i_0\sigma} &  \begin{text}{if $x\in A_{i_0\sigma}$.}\end{text}\end{cases}
	\]
	Clearly, for each $i\in I$, $A_{i\sigma}\beta\subseteq A_{i\sigma}$ which implies that $\beta$ is $E^*$-preserving and regular. We note that for each $i\neq i_0$, $|A_{i\sigma}\beta|=|B_{i\sigma}|=t_i=|A_i\gamma|$. Moreover, we can see that $|A_{i_0\sigma}\beta|=|B_{i_0\sigma}|=r<r^\alpha_{i_0\sigma}$ and $|A_{i\sigma}\beta|=|B_{i\sigma}|=t_i<r^\alpha_{i\sigma}$ for all $i\neq i_0$. Thus $\beta\in Q(\mathbf{r}^\alpha)=\reg(T)\gamma\reg(T)$ and hence $\beta=\lambda\gamma\mu$ for some $\lambda,\mu\in\reg(T)$. By Lemma \ref{lem: pre ideal}, there exists a permutation $\rho$ on $I$ such that $|A_{i\sigma}\beta|\leq|A_{i\sigma\rho}\gamma|$ for all $i\in I$. Since $I$ is finite, there is the smallest natural number $n$ such that $i_0(\sigma\rho)^n=i_0$. Hence
	\[
	|A_{i_0\sigma\rho}\gamma|=|A_{i_0\sigma\rho\sigma}\beta|\leq|A_{i_0(\sigma\rho)^2}\gamma|,
	\]
	\[
	|A_{i_0(\sigma\rho)^2}\gamma|=|A_{i_0(\sigma\rho)^2\sigma}\beta|\leq|A_{i_0(\sigma\rho)^3}\gamma|,
	\]
	\[
	\vdots
	\]
	\[
	|A_{i_0(\sigma\rho)^{n-1}}\gamma|=|A_{i_0(\sigma\rho)^{n-1}\sigma}\beta|\leq|A_{i_0(\sigma\rho)^n}\gamma|.
	\]
	So
	\[
	r=|A_{i_0\sigma}\beta|\leq|A_{i_0\sigma\rho}\gamma|\leq|A_{i_0(\sigma\rho)^n}\gamma|=|A_{i_0}\gamma|=t_{i_0}<r
	\]
	which is a contradiction. We conclude that $r^\alpha_{i\sigma}=t'_i\leq|A_{i\sigma}|'$ for all $i\in I$. Therefore, $r^\alpha_i=s'_i$ where $1\leq s_i\leq|A_i|$ for all $i\in I$.
\end{proof}

In the previous theorem, the proof of the statement "If $\bigcup_{\alpha\in J}Q(\mathbf{r}^\alpha)$ is a principal ideal, then $J=\{\alpha\}$ is a singleton." does not depend on the finite property of $I$. So, to sum it up, $\bigcup_{\alpha\in J}Q(\mathbf{r}^\alpha)$ is a principal ideal if and only if $J=\{\alpha\}$ is a singleton.

From now on, we adopt the notation introduced by \cite{Clifford}, namely, if $\alpha\in T(X)$, then we write
\[
\alpha = \left(
\begin{array}{c}
	X_i\\
	a_i\\
\end{array}
\right)
\]
and take as understood that the subscript $i$ belongs to some unmentioned index set $I$, the abbreviation $\{a_i\}$ denotes $\{a_i:i\in I\}$, and that $X\alpha=\{a_i\}$ and $a_i\alpha^{-1}=X_i$.

In the next theorem, and later in this paper, we denote the set of cardinal numbers $\{r_i\}_{i\in I}$ where $r_i=2$ for all $i\in I$ by $\mathbf{2}$. The below theorem shows that $\reg(T)$ has the smallest ideal.

\begin{thm}\label{thm: minimal ideal}
	$Q(\mathbf{2})$ is the smallest ideal of $\reg(T)$. Consequently, it is the (unique) minimal ideal of $\reg(T)$.
\end{thm}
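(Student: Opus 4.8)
The plan is to first pin down concretely what $Q(\mathbf{2})$ is, then show it is contained in every ideal, and finally deduce the ``consequently'' clause as a formal consequence. Since here $r_i=2$ for every $i$, membership $\alpha\in Q(\mathbf{2})$ requires $|A_i\alpha|<2$ for all $i$, and because each $A_i$ is a nonempty $E$-class and $\alpha$ is a function, $A_i\alpha\neq\emptyset$, so this forces $|A_i\alpha|=1$ for all $i$; the witnessing permutation $\sigma$ plays no role since all the $r_i$ coincide. Thus $Q(\mathbf{2})$ is exactly the set of those $\alpha\in\reg(T)$ that collapse each $E$-class to a single point. It is nonempty: fixing a point $p_i\in A_i$ for each $i$ and sending all of $A_i$ to $p_i$ yields an $E^*$-preserving regular map lying in $Q(\mathbf{2})$. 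That it is an ideal is already covered by the general fact established before Theorem \ref{thm: ideal}, noting that $\mathbf{2}$ is admissible because $2\leq|A_i|'$ holds for every nonempty $A_i$.

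For the heart of the proof, I would let $K$ be an arbitrary ideal of $\reg(T)$ and take any $\alpha\in Q(\mathbf{2})$; the goal is to show $\alpha\in K$. Choose any $\kappa\in K$ (possible, as ideals are nonempty). By Lemma \ref{lem: pre ideal}, it suffices to exhibit a permutation $\rho$ of $I$ with $|A_i\alpha|\leq|A_{i\rho}\kappa|$ for all $i$, for then $\alpha=\lambda\kappa\mu$ for some $\lambda,\mu\in\reg(T)$ and the ideal property gives $\alpha\in K$. Here the identity permutation works immediately: $|A_i\alpha|=1$ is the least possible cardinality of the nonempty image of an $E$-class, while $|A_i\kappa|\geq 1$ for every $i$, so $|A_i\alpha|\leq|A_i\kappa|$ for all $i$. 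Hence $Q(\mathbf{2})\subseteq K$, and since $K$ was arbitrary, $Q(\mathbf{2})$ is the smallest ideal of $\reg(T)$. One may equally argue via Theorem \ref{thm: ideal}: because $2\leq r^\alpha_i$ for every admissible family, the identity permutation gives $Q(\mathbf{2})\subseteq Q(\mathbf{r}^\alpha)$ for each $\alpha$, so $Q(\mathbf{2})$ lies inside every union $\bigcup_{\alpha\in J}Q(\mathbf{r}^\alpha)$, i.e.\ inside every ideal.

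Finally, the ``consequently'' clause is a standard formality once smallness is known: any minimal ideal $M$ of $\reg(T)$ must contain the smallest ideal $Q(\mathbf{2})$, and minimality of $M$ then forces $M=Q(\mathbf{2})$, so $Q(\mathbf{2})$ is the unique minimal ideal. I do not expect a genuine obstacle in this theorem; the substantive work was done in Lemma \ref{lem: pre ideal}. The only points demanding care are the boundary behaviour of the strict inequality---that $|A_i\alpha|<2$ collapses to the singleton condition $|A_i\alpha|=1$, and that $1$ is the minimum possible image size of a class---together with confirming nonemptiness, so that $Q(\mathbf{2})$ genuinely qualifies as an ideal rather than being vacuous.
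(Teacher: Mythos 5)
Your proof is correct, and your primary argument takes a genuinely different (and slightly more economical) route than the paper. The paper proves the containment by invoking Theorem \ref{thm: ideal}: every ideal of $\reg(T)$ has the form $\bigcup_{\alpha\in J}Q(\mathbf{r}^\alpha)$, and since $|A_i\beta|<2\leq r^\alpha_i$ for every $\beta\in Q(\mathbf{2})$ and every admissible family $\mathbf{r}^\alpha$, one gets $Q(\mathbf{2})\subseteq Q(\mathbf{r}^\alpha)\subseteq\bigcup_{\alpha\in J}Q(\mathbf{r}^\alpha)$; this is exactly the alternative you sketch at the end of your second paragraph. Your main argument instead bypasses the classification of ideals entirely: given an arbitrary ideal $K$ and any $\kappa\in K$, the trivial bound $|A_i\alpha|=1\leq|A_i\kappa|$ together with Lemma \ref{lem: pre ideal} (with the identity permutation) yields $\alpha=\lambda\kappa\mu\in K$. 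This is logically cleaner in that it depends only on the lemma and not on Theorem \ref{thm: ideal} (whose proof itself rests on that lemma), whereas the paper's version gets a marginally shorter write-up by reusing the already-established normal form for ideals. Both proofs share the same preliminary observations --- that $|A_i\alpha|<2$ collapses to $|A_i\alpha|=1$, and that $Q(\mathbf{2})$ is nonempty via the map sending each class $A_i$ to a fixed point of itself --- and your handling of the ``consequently'' clause matches the standard deduction the paper leaves implicit. No gaps.
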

\begin{proof}
	Let $X/E=\{A_i:i\in I\}$. For each $i\in I$, we fix $y_i\in A_i$ and then define a function $\gamma:X\to X$ by 
	\[
	\gamma= \left(
	\begin{array}{c}
	A_i \\
	y_i \\ 
	\end{array}
	\right).
	\]
	It is obvious that $A_i\gamma\subseteq A_i$ and $|A_i\gamma|=1<2$ for all $i\in I$. Hence $\gamma\in Q(\mathbf{2})\neq\emptyset$. Assume that $\bigcup_{\alpha\in J}Q(\mathbf{r}^\alpha)$ defined as same as Theorem \ref{thm: ideal} is an ideal of $\reg(T)$. Let $\beta\in Q(\mathbf{2})$. Then, for each $\alpha\in J$, we obtain $|A_i\beta|<2\leq r^\alpha_i$ for all $i\in I$. Hence $\beta\in Q(\mathbf{r}^\alpha)\subseteq\bigcup_{\alpha\in J}Q(\mathbf{r}^\alpha)$ which implies that $Q(\mathbf{2})$ is the smallest ideal of $\reg(T)$.
\end{proof}

According to \cite{Clifford1}, we call the minimal ideal $Q(\mathbf{2})$ the \textit{kernel} of $\reg(T)$. It is obvious that $Q(\mathbf{2})=\{\alpha\in\reg(T):|A\alpha|=1\ \text{for all}\ A\in X/E\}$ which implies that $\pi(\alpha)=X/E$ for all $\alpha\in Q(\mathbf{2})$. Moreover, for each $\alpha\in Q(\mathbf{2})$, $|X\alpha\cap A|=1$ for all $A\in X/E$, equivalently, $X\alpha$ is a \textit{cross section} of the partition
$X/E$ induced by $E$, i.e., each $E$-class contains exactly one element of $X\alpha$.

Recall from \cite[pp. 37-40]{Clifford1} that a semigroup $S$ is called a \textit{right simple} if it contains no proper right ideal and a semigroup $S$ is called a \textit{right group} if it is right simple and left cancellative. This is equivalent to saying that $S$ is a right group if and only if, for any elements $a$ and $b$ of $S$, there exists one and only one element $x$ of $S$ such that $ax=b$. Additional descriptions are contained in the following lemma which is a combination of statements contained in Exercises 2 and 4 for \S1.11 of \cite{Clifford1}.

\begin{lem}
	Let $S$ be a semigroup. The following statements are equivalent.
	\begin{enumerate}[(1)]
		\item $S$ is a right group.
		\item $S$ is a union of disjoint groups such that the set of identity elements of the groups is a right zero subsemigroup of $S$.
		\item $S$ is regular and left cancellative.
	\end{enumerate}
\end{lem}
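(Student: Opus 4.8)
The plan is to prove the cycle of implications $(1)\Rightarrow(2)\Rightarrow(3)\Rightarrow(1)$. Throughout I would lean on the reformulation recorded just above the lemma: $S$ is a right group exactly when, for every $a,b\in S$, the equation $ax=b$ has a \emph{unique} solution $x\in S$. Here existence of a solution for all $a,b$ is precisely right simplicity (equivalently $aS=S$ for all $a$), and uniqueness is precisely left cancellativity. Having this dictionary in hand keeps the manipulations below elementary.

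For $(1)\Rightarrow(2)$ I would first manufacture idempotents. Since $aS=S$, each $a$ admits $e$ with $ae=a$; then $ae^2=ae$, so left cancellation forces $e^2=e$, and the same cancellation shows this idempotent right identity is unique to $a$. Writing $E$ for the set of idempotents, this lets me partition $S=\bigsqcup_{e\in E}G_e$ with $G_e=\{a\in S:ae=a\}$. Closure of $G_e$ and the right-identity property are immediate; the left-identity law $ea=a$ for $a\in G_e$ follows by writing $a=eb$ (possible since $eS=S$) and computing $ea=e(eb)=eb=a$. For $e,f\in E$ one has $e(ef)=(ee)f=ef=e\cdot f$, so left cancellation gives $ef=f$, i.e. $E$ is a right zero subsemigroup. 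The one genuinely delicate point, and the step I expect to be the main obstacle, is producing two-sided inverses inside $G_e$, since a left-cancellative monoid need not be a group. The plan there is: solve $ab=e$ and replace $b$ by $be\in G_e$ so that $ab=e$ with $b\in G_e$; apply existence again to $b$ to get $bc=e$ with $c\in G_e$; then $a=ae=a(bc)=(ab)c=ec=c$, so $c=a$ and hence $ba=bc=e$. Thus $ab=ba=e$ and $G_e$ is a group with identity $e$.

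For $(2)\Rightarrow(3)$, regularity is immediate: each $a$ lies in some group $G_i$ and $aa^{-1}a=a$. For left cancellation, from $ax=ay$ I multiply on the left by the group inverse $a^{-1}$ to obtain $e_ix=e_iy$, where $e_i$ is the identity of $a$'s group. I then verify $e_ix=x$ for every $x\in S$ by writing $x\in G_j$ and using $e_ie_j=e_j$ (right zero) together with the left-identity law $e_jx=x$; symmetrically $e_iy=y$, whence $x=y$.

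Finally, for $(3)\Rightarrow(1)$ it suffices to establish right simplicity, i.e. $aS=S$, since left cancellativity is assumed. Regularity gives $aya=a$ for some $y$, and then $ay$ is idempotent. The crucial observation is that \emph{any} idempotent $g$ acts as a left identity on all of $S$: from $g(gc)=(gg)c=gc$ and left cancellation, $gc=c$. Applying this to $g=ay$ yields $b=(ay)b=a(yb)$ for every $b\in S$, so $b\in aS$ and hence $aS=S$. Together with left cancellativity this makes $S$ a right group, closing the cycle and proving the three conditions equivalent.
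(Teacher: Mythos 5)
Your proof is correct. Note, however, that the paper does not actually prove this lemma: it is stated as a known fact, ``a combination of statements contained in Exercises 2 and 4 for \S1.11'' of Clifford--Preston, so there is no argument in the paper to compare yours against. Your write-up supplies a complete, self-contained verification of the cycle $(1)\Rightarrow(2)\Rightarrow(3)\Rightarrow(1)$, and every step checks out: the dictionary between ``unique solvability of $ax=b$'' and ``right simple plus left cancellative'' is exactly the reformulation the paper itself quotes just before the lemma; the manufacture of a unique idempotent right identity for each element, the partition $S=\bigsqcup_e G_e$, and the right-zero property $ef=f$ all follow cleanly from left cancellation. You correctly flag the one genuinely delicate point, namely that a left-cancellative monoid need not be a group, and your resolution --- solving $ab=e$, normalizing $b$ to $be\in G_e$, solving $bc=e$, and deducing $c=ec=a(bc)=a$ so that $ba=e$ --- is the standard and correct way to extract two-sided inverses here. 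The directions $(2)\Rightarrow(3)$ (using $e_ix=e_ie_jx=e_jx=x$) and $(3)\Rightarrow(1)$ (every idempotent is a left identity by cancellation, hence $aS=S$) are likewise sound.
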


\begin{thm}
	$Q(\mathbf{2})$ is a right group.
\end{thm}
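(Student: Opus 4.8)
The plan is to verify characterization (3) of the preceding lemma, namely that $Q(\mathbf{2})$ is both regular and left cancellative. The natural starting point is the structural description recorded just above: every $\alpha\in Q(\mathbf{2})$ collapses each class to a single point, say $A_i\alpha=\{a_i\}$ with $a_i\in A_{i\tau_\alpha}$, where $\tau_\alpha$ is the permutation of $I$ induced by $\alpha$ (it is a genuine permutation precisely because $X\alpha$ is a cross section of $X/E$). Thus $\alpha$ is completely encoded by the pair consisting of $\tau_\alpha$ together with the chosen image point $a_i$ in each class. The key auxiliary fact I would record first is the composition rule: for $\alpha,\beta\in Q(\mathbf{2})$, the product $\alpha\beta$ sends $A_i$ to the single point $a_i\beta$, which is the image point of the class $A_{i\tau_\alpha}$ under $\beta$, and the permutation induced by $\alpha\beta$ is $\tau_\alpha\tau_\beta$.

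For left cancellativity, suppose $\alpha\beta=\alpha\gamma$ with $\alpha,\beta,\gamma\in Q(\mathbf{2})$. By the composition rule, for each $i\in I$ the element $\alpha\beta$ sends $A_i$ to the image point of $A_{i\tau_\alpha}$ under $\beta$, while $\alpha\gamma$ sends $A_i$ to the image point of $A_{i\tau_\alpha}$ under $\gamma$. Since $\tau_\alpha$ is a permutation of $I$, the index $i\tau_\alpha$ runs over all of $I$, so $\beta$ and $\gamma$ have the same single image point on every class; as each collapses its class to exactly that point, this forces $\beta=\gamma$.

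For regularity, fix $\alpha\in Q(\mathbf{2})$ with induced permutation $\tau=\tau_\alpha$, and choose any $\beta\in Q(\mathbf{2})$ whose induced permutation is $\tau^{-1}$ (such $\beta$ exists: send each $A_j$ to an arbitrary point of $A_{j\tau^{-1}}$; since $\tau^{-1}$ is a bijection of $I$ this map is $E^*$-preserving and regular, and it lies in $Q(\mathbf{2})$ as each class has image of size one). I claim $\alpha\beta\alpha=\alpha$. Indeed, for $x\in A_i$ we have $x\alpha=a_i\in A_{i\tau}$; applying $\beta$ sends $a_i$ to the image point of $A_{i\tau}$, which lies in $A_{i\tau\tau^{-1}}=A_i$; and applying $\alpha$ to any point of $A_i$ returns $a_i$. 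Hence $x\alpha\beta\alpha=a_i=x\alpha$. Notably the choice of image points for $\beta$ is irrelevant here: only the permutation $\tau^{-1}$ matters. Thus every element of $Q(\mathbf{2})$ has an inverse inside $Q(\mathbf{2})$, and together with left cancellativity the lemma gives that $Q(\mathbf{2})$ is a right group.

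The main obstacle, such as it is, is purely bookkeeping: one must keep careful track of the permutation induced by each element and confirm that the auxiliary map $\beta$ built for regularity genuinely lies in $Q(\mathbf{2})$, which rests entirely on $\tau^{-1}$ being an honest permutation of $I$. Once the permutation encoding and the composition rule are in place, both cancellativity and regularity collapse to short index-chasing arguments, and no real difficulty remains. (An equivalent and equally short route, if preferred, is to verify the defining property of a right group directly, by checking that for given $\alpha,\beta\in Q(\mathbf{2})$ the equation $\alpha\xi=\beta$ has the unique solution $\xi$ sending $A_j$ to $b_{j\tau_\alpha^{-1}}$.)
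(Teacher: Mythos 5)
Your proof is correct and follows essentially the same route as the paper: both verify condition (3) of the preceding lemma by exhibiting an explicit inner inverse in $Q(\mathbf{2})$ (your $\beta$ inducing the permutation $\tau^{-1}$ is exactly the paper's $\beta$ with domain classes $[a_i]$ and values $b_i\in A_i$) and by deducing left cancellativity from the fact that the image of any element of $Q(\mathbf{2})$ meets every $E$-class. The permutation-on-$I$ bookkeeping you introduce is a harmless repackaging of the same argument.
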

\begin{proof}
	Assume that $X/E=\{A_i:i\in I\}$. Let $\alpha\in Q(\mathbf{2})$. We can write
	\[
	\alpha= \left(
	\begin{array}{c}
		A_i \\
		a_i \\ 
	\end{array}
	\right).
	\]
	For each $i\in I$, choose $b_i\in A_i$. Define a function $\beta$ by
	\[
	\beta= \left(
	\begin{array}{c}
		[a_i] \\
		b_i \\ 
	\end{array}
	\right)
	\]
	where $[a_i]$ is an $E$-class containing the element $a_i$. It is easy to see that $\beta\in Q(\mathbf{2})$	and $\alpha=\alpha\beta\alpha$. Therefore, $Q(\mathbf{2})$ is regular.
	
	To prove the left cancellation, suppose that $\alpha,\beta,\gamma\in Q(\mathbf{2})$ such that $\gamma\alpha=\gamma\beta$. Let $x\in X$. Then $x\in A$ for some $A\in X/E$. Choose $y\in X$ such that $y\gamma\in A$ (possible since $A\cap X\gamma$ is nonempty). We obtain 
	\[
	x\alpha=y\gamma\alpha=y\gamma\beta=x\beta
	\]
	and hence $\alpha=\beta$, as required. 
\end{proof}

Applying Theorem \ref{thm: LRH} (3) with Hall's Theorem, we obtain the following result.

\begin{prop}
	For each $\alpha,\beta\in Q(\mathbf{2})$, $(\alpha,\beta)\in\mathcal{H}^{Q(\mathbf{2})}$ if and only if $X\alpha=X\beta$.
\end{prop}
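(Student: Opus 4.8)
The plan is to reduce the statement to Theorem \ref{thm: LRH} (3) by invoking Hall's Theorem, exactly as the sentence preceding the proposition suggests. First I would observe that $Q(\mathbf{2})$ is a regular subsemigroup of $\reg(T)$: indeed, the previous theorem shows $Q(\mathbf{2})$ is a right group, and a right group is regular by part (3) of the preceding lemma. Since $Q(\mathbf{2})$ is a regular subsemigroup of the (regular) semigroup $\reg(T)$, Proposition 2.4.2 of \cite{Howie} (Hall's Theorem) applies and yields
\[
\mathcal{H}^{Q(\mathbf{2})}=\mathcal{H}^{R}\cap\bigl(Q(\mathbf{2})\times Q(\mathbf{2})\bigr).
\]
Thus for $\alpha,\beta\in Q(\mathbf{2})$ the membership $(\alpha,\beta)\in\mathcal{H}^{Q(\mathbf{2})}$ is equivalent to $(\alpha,\beta)\in\mathcal{H}^{R}$.

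Next I would apply Theorem \ref{thm: LRH} (3) to the pair $\alpha,\beta\in Q(\mathbf{2})\subseteq\reg(T)$: it gives $(\alpha,\beta)\in\mathcal{H}^{R}$ if and only if $\pi(\alpha)=\pi(\beta)$ and $X\alpha=X\beta$. The key simplification is that the condition on the induced partitions is automatic here. As was noted immediately after the definition of the kernel, every $\alpha\in Q(\mathbf{2})$ satisfies $|A\alpha|=1$ for all $A\in X/E$, whence $\pi(\alpha)=X/E=\pi(\beta)$. Consequently the only surviving requirement is $X\alpha=X\beta$, and combining this with the displayed identity above gives $(\alpha,\beta)\in\mathcal{H}^{Q(\mathbf{2})}\iff X\alpha=X\beta$, as claimed.

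Honestly I do not expect a genuine obstacle in this argument: the content is entirely in correctly chaining Hall's Theorem with the already-established characterization of $\mathcal{H}^{R}$. The one point that needs care is verifying the hypotheses of Hall's Theorem, namely that $Q(\mathbf{2})$ really is a regular subsemigroup of $\reg(T)$ (this is why it matters that we have already identified $Q(\mathbf{2})$ as a right group), and then recognizing that the partition condition $\pi(\alpha)=\pi(\beta)$ from Theorem \ref{thm: LRH} (3) is vacuously satisfied on the kernel. Once these two observations are in place, the equivalence is immediate.
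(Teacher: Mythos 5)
Your proposal is correct and follows exactly the route the paper intends: the paper's entire justification is the sentence ``Applying Theorem \ref{thm: LRH} (3) with Hall's Theorem, we obtain the following result,'' and you have simply filled in the details (that $Q(\mathbf{2})$ is a regular subsemigroup of $\reg(T)$ because it is a right group, and that $\pi(\alpha)=X/E$ for every $\alpha\in Q(\mathbf{2})$ makes the partition condition vacuous). No gaps; this is the same argument, made explicit.
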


Refer to Exercise 2 for \S2.1 of \cite{Clifford1}, every $\mathcal{H}$-class of a right group $S$ is a subgroup of $S$. As a direct consequence of the above proposition, we get the following corollary immediately.

\begin{cor}\label{thm: subgroup of Q}
	For each $\alpha\in Q(\mathbf{2})$, $H_\alpha=\{\beta\in Q(\mathbf{2}):X\alpha=X\beta\}$ forms a subgroup of $Q(\mathbf{2})$.
\end{cor}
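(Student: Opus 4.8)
The plan is to recognize $H_\alpha$ as exactly the $\mathcal{H}^{Q(\mathbf{2})}$-class containing $\alpha$ and then appeal to the structure theory of right groups, so that the corollary becomes an immediate consequence of the preceding Proposition together with the fact that $Q(\mathbf{2})$ is a right group.

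First I would observe that, by the Proposition immediately above, two elements $\beta,\gamma\in Q(\mathbf{2})$ satisfy $(\beta,\gamma)\in\mathcal{H}^{Q(\mathbf{2})}$ if and only if $X\beta=X\gamma$. Fixing $\alpha\in Q(\mathbf{2})$, this iff says precisely that the set $H_\alpha=\{\beta\in Q(\mathbf{2}):X\alpha=X\beta\}$ coincides with the $\mathcal{H}^{Q(\mathbf{2})}$-class of $\alpha$. This class is nonempty, since $\alpha\in H_\alpha$ because $X\alpha=X\alpha$.

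Next I would recall that $Q(\mathbf{2})$ is a right group, as established in the Theorem above, and invoke the cited result (Exercise 2 for \S2.1 of \cite{Clifford1}) that in any right group every $\mathcal{H}$-class is a subgroup. Applying this to the right group $Q(\mathbf{2})$ and the $\mathcal{H}^{Q(\mathbf{2})}$-class $H_\alpha$ yields at once that $H_\alpha$ forms a subgroup of $Q(\mathbf{2})$, which is the desired conclusion.

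There is essentially no genuine obstacle in this argument, since all of the substantive content has already been front-loaded into the Proposition (which converts the set-builder condition $X\alpha=X\beta$ into membership in a single $\mathcal{H}$-class) and into the structural theorem identifying $Q(\mathbf{2})$ as a right group. The only point deserving a word of care is the identification of the concrete set $H_\alpha$ with the abstract Green class, but this is immediate from the biconditional in the Proposition; thereafter the conclusion is purely a citation of the general theory of right groups.
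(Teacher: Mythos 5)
Your argument is correct and is exactly the paper's own: the preceding Proposition identifies $H_\alpha$ with the $\mathcal{H}^{Q(\mathbf{2})}$-class of $\alpha$, and the cited fact that every $\mathcal{H}$-class of a right group is a subgroup finishes the proof. Nothing further is needed.
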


In general, a right group is a union of disjoint groups. For $Q(\mathbf{2})$, we obtain the following result.

\begin{thm}
	$Q(\mathbf{2})$ is a union of symmetric groups.
\end{thm}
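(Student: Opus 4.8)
The plan is to exploit the fact, already established, that $Q(\mathbf{2})$ is a right group and hence a union of disjoint groups; by Corollary \ref{thm: subgroup of Q} these groups are precisely the $\mathcal{H}^{Q(\mathbf{2})}$-classes $H_\alpha=\{\beta\in Q(\mathbf{2}):X\beta=X\alpha\}$. It therefore suffices to show that each $H_\alpha$ is isomorphic to a symmetric group, and I will show in fact that every $H_\alpha$ is isomorphic to $S_I$, the symmetric group on the index set $I$, where $X/E=\{A_i:i\in I\}$.

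Fix $\alpha\in Q(\mathbf{2})$. Since $X\alpha$ is a cross section of $X/E$, for each $i\in I$ let $c_i$ denote the unique element of $X\alpha\cap A_i$. For $\beta\in H_\alpha$, recall from the beginning of this section that $\beta$ induces a permutation $\tau_\beta$ on $I$ determined by $A_i\beta\subseteq A_{i\tau_\beta}$; because $\beta$ is constant on each $A_i$ and $X\beta=X\alpha$, in fact $A_i\beta=\{c_{i\tau_\beta}\}$ for every $i\in I$. I would then define $\Phi:H_\alpha\to S_I$ by $\Phi(\beta)=\tau_\beta$ and verify that it is a group isomorphism.

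For the homomorphism property one computes $A_i(\beta\beta')=\{c_{i\tau_\beta}\}\beta'=\{c_{(i\tau_\beta)\tau_{\beta'}}\}$, so $\tau_{\beta\beta'}=\tau_\beta\tau_{\beta'}$ and $\Phi(\beta\beta')=\Phi(\beta)\Phi(\beta')$. Injectivity is immediate, since $\tau_\beta$ recovers $\beta$ through $A_i\beta=\{c_{i\tau_\beta}\}$. For surjectivity, given $\tau\in S_I$ I would define $\beta_\tau$ by $x\beta_\tau=c_{i\tau}$ for $x\in A_i$; then $\beta_\tau$ is constant on each class with $|A_i\beta_\tau|=1$, it is $E^*$-preserving because $\tau$ is a bijection, and it is regular because $\tau$ is surjective, so $\beta_\tau\in Q(\mathbf{2})$, while $X\beta_\tau=\{c_{i\tau}:i\in I\}=X\alpha$ places it in $H_\alpha$ with $\Phi(\beta_\tau)=\tau$. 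Hence $H_\alpha\cong S_I$, and $Q(\mathbf{2})$ is the disjoint union of its groups $H_\alpha$, each a copy of $S_I$, which is the desired conclusion.

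The argument is largely routine; the only points deserving care are keeping the composition convention consistent so that $\beta\mapsto\tau_\beta$ comes out as a homomorphism rather than an anti-homomorphism, and confirming that the constructed $\beta_\tau$ genuinely lies in $Q(\mathbf{2})$ — that is, that both the $E^*$-preserving condition and regularity follow from $\tau$ being a permutation rather than merely a map of $I$ into itself.
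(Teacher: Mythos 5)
Your proof is correct and follows essentially the same route as the paper: decompose $Q(\mathbf{2})$ into the $\mathcal{H}$-classes $H_\alpha$ via Corollary \ref{thm: subgroup of Q} and show each is a symmetric group. The only cosmetic difference is that you realize $H_\alpha$ as the symmetric group on the index set $I$ (with the isomorphism written out explicitly), whereas the paper identifies it with the symmetric group on the cross section $X\alpha$; these coincide since $|X\alpha|=|I|$.
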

\begin{proof}
	By Corollary \ref{thm: subgroup of Q}, we have $Q(\mathbf{2})$ is a union of group $H_\alpha=\{\beta\in Q(\mathbf{2}):X\alpha=X\beta\}$ for all $\alpha\in Q(\mathbf{2})$. It suffices to show that $H_\alpha$ is isomorphic to a symmetric group. Let $\beta\in H_\alpha$. Then $\beta\in Q(\mathbf{2})$ and $X\alpha=X\beta$. Since $\beta$ is $E^*$-preserving, one can easily see that $\beta$ is a one-to-one correspondence between $X/E$ and $X\alpha$. Hence it is straightforward to verify that $H_\alpha$ is isomorphic to the symmetric group on $X\alpha$.
\end{proof}

\section{Isomorphism theorem and embedding problem}

It is well-known that the ideals of $T(X)$ are precisely the set
\[
I_r=\{\alpha\in T(X):|X\alpha|<r\}
\]
where $2\leq r\leq|X|'$ (see pp. 227 of \cite{Clifford} for details). It is clear that $I_2\subseteq I_3\subseteq\cdots$ and hence it forms a chain under inclusion. We conclude that $I_2$, the set of all constant functions from $X$ into $X$, is the (unique) minimal ideal of $T(X)$.

\begin{lem}
	$\alpha\in Q(\mathbf{2})$ is an idempotent if and only if $A\alpha\subseteq A$ for all $A\in X/E$.
\end{lem}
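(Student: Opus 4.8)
The plan is to exploit the structural description of $Q(\mathbf{2})$ recorded just before the statement: every $\alpha\in Q(\mathbf{2})$ collapses each $E$-class $A_i$ to a single point $a_i$, and its range $X\alpha=\{a_i\}$ is a cross section of $X/E$, meeting each $E$-class in exactly one point. The entire lemma reduces to one observation, which I would isolate at the outset: \emph{$\alpha$ is idempotent if and only if every element of $X\alpha$ is a fixed point of $\alpha$}. Indeed, if $\alpha^2=\alpha$ and $a=x\alpha\in X\alpha$, then $a\alpha=x\alpha^2=x\alpha=a$; conversely, if each point of $X\alpha$ is fixed, then for every $x\in X$ we have $x\alpha\in X\alpha$, so $x\alpha^2=(x\alpha)\alpha=x\alpha$, giving $\alpha^2=\alpha$. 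With this in hand, both implications of the lemma become short.

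For the forward direction, assume $\alpha$ is idempotent, so by the observation each point of $X\alpha$ is fixed. Fix an $E$-class $A$ and let $c$ be the unique point of $X\alpha\cap A$, which exists and is unique precisely because $X\alpha$ is a cross section of $X/E$. Then $c\in A$ and $c\alpha=c$. Since $|A\alpha|=1$, the single element of $A\alpha$ may be computed from any member of $A$; taking $c$ itself, this element is $c\alpha=c$, so $A\alpha=\{c\}\subseteq A$. As $A\in X/E$ was arbitrary, we conclude $A\alpha\subseteq A$ for all $A\in X/E$.

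For the converse, assume $A\alpha\subseteq A$ for every $A\in X/E$; I would again verify the fixed-point condition of the observation. Given $a\in X\alpha$, choose the $E$-class $A$ with $A\alpha=\{a\}$ (such a class exists since $a$ is the image of some point, and that point's class has singleton image $\{a\}$). Then $a\in A\alpha\subseteq A$, so $a$ itself lies in $A$, whence $a\alpha\in A\alpha=\{a\}$, i.e.\ $a\alpha=a$. Thus every element of $X\alpha$ is fixed, and $\alpha$ is idempotent. There is no serious obstacle here; the only point demanding a little care is the bookkeeping in the forward direction, where one must use the cross-section property $|X\alpha\cap A|=1$ to pin down the image of $A$ as the fixed point lying inside $A$. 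Equivalently, idempotency forces the permutation $\tau$ on $I$ with $A_i\alpha\subseteq A_{i\tau}$ to be the identity, and this reformulation could also serve as the backbone of the argument.
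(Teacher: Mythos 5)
Your proof is correct. The converse direction is essentially the paper's own argument (from $x\in A$ and $x\alpha\in A\alpha\subseteq A$, both $x$ and $x\alpha$ have the same image because $|A\alpha|=1$). The forward direction, however, takes a slightly different route: you go through the general fact that a transformation is idempotent exactly when it fixes its range pointwise, and then invoke the cross-section property $|X\alpha\cap A|=1$ to locate that fixed point inside $A$. The paper instead gets the forward direction in one line from the $E^*$-condition alone: idempotency gives $((a\alpha)\alpha,a\alpha)\in E$, and since $\alpha$ is $E^*$-preserving this pulls back to $(a\alpha,a)\in E$, i.e.\ $a\alpha\in A$ --- no appeal to regularity or to the cross-section structure of $X\alpha$ is needed. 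So your argument is marginally heavier in its prerequisites (it uses that $\alpha$ is regular, via the cross-section fact), while the paper's isolates the $E^*$-preservation as the only ingredient; your fixed-point reformulation does have the small advantage of making the equivalence with ``the induced permutation $\tau$ on $I$ is the identity'' transparent.
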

\begin{proof}
	Suppose that $\alpha\in Q(\mathbf{2})$ is an idempotent. Let $A\in X/E$ and $a\in A$. Then $(a\alpha)\alpha=a\alpha^2=a\alpha$. Thus $(a\alpha,a)\in E$ and so $a\alpha\in A$. Consequently, $A\alpha\subseteq A$. Conversely, assume that $A\alpha\subseteq A$ for all $A\in X/E$. Let $x\in X$. Then $x\in A$ for some $A\in X/E$. We obtain $x\alpha\in A\alpha\subseteq A$. Hence $x\alpha^2=x\alpha$ since $|A\alpha|=1$.
\end{proof}

By the above lemma, $\alpha\in Q(\mathbf{2})$ is an idempotent if and only if for each $A\in X/E$, there is $x\in A$ such that $A\alpha=\{x\}$. The next result shows that $\reg(T)$ is almost never isomorphic to $T(Z)$ for any set $Z$.

\begin{thm}
	$\reg(T)\cong T(Z)$ for some set $Z$ if and only if $E=X\times X$.
\end{thm}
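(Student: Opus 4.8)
The plan is to prove the two implications separately, with the forward direction being immediate from the introduction and the reverse direction resting entirely on the structure of the kernel $Q(\mathbf{2})$.

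For the easy direction, suppose $E = X \times X$. As observed in the introduction, $T_{E^*}(X) = T(X)$ in this case, and since $T(X)$ is a regular semigroup we get $\reg(T) = T(X)$. Taking $Z = X$ then yields $\reg(T) \cong T(Z)$ at once, so nothing further is needed here.

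For the converse, I would assume an isomorphism $\phi \colon \reg(T) \to T(Z)$ and argue via minimal ideals. The guiding idea is that $\phi$ sends ideals bijectively to ideals and respects inclusion, so it must carry the smallest ideal of $\reg(T)$ onto the smallest ideal of $T(Z)$. By Theorem \ref{thm: minimal ideal} the kernel of $\reg(T)$ is $Q(\mathbf{2})$, while, exactly as recalled at the start of Section \ref{sec: Ideals} for $T(X)$, the kernel of $T(Z)$ (for $Z$ nonempty) is $I_2$, the set of all constant maps on $Z$. Thus $\phi$ restricts to an isomorphism $Q(\mathbf{2}) \cong I_2$. Now I would exploit the sharp contrast in their internal structure: the semigroup $I_2$ of constants is a right zero semigroup, since for constants $c_a, c_b$ one checks $c_a c_b = c_b$, and in particular $I_2$ is a band, so all of its group $\mathcal{H}$-classes are trivial. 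On the other hand, the preceding theorem shows $Q(\mathbf{2})$ is a union of the groups $H_\alpha$, each isomorphic to the symmetric group on a cross section $X\alpha$ of $X/E$, so that $|X\alpha| = |X/E|$. Because an isomorphism maps group $\mathcal{H}$-classes to group $\mathcal{H}$-classes, the relation $Q(\mathbf{2}) \cong I_2$ forces every $H_\alpha$ to be trivial, i.e.\ the symmetric group on a set of cardinality $|X/E|$ is trivial. This happens precisely when $|X/E| \leq 1$, that is, when $X$ consists of a single $E$-class, and therefore $E = X \times X$.

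The hard part will be the clean justification that $\phi$ identifies the two kernels and, consequently, matches their maximal subgroups; once this is in place the argument reduces to the single structural dichotomy that the kernel of $T(Z)$ is a band, whereas $Q(\mathbf{2})$ fails to be a band the moment $E$ is not the universal relation. A minor loose end to tie off is the degenerate case $|Z| \leq 1$ (with the corresponding $|X/E| \leq 1$): there the kernel descriptions become trivial, but the same dichotomy applies verbatim since a one-element semigroup is simultaneously a trivial symmetric group and a trivial right zero semigroup, and in every such case the conclusion $E = X \times X$ holds outright.
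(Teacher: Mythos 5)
Your proof is correct, and its skeleton is the same as the paper's: both arguments transport the minimal ideal through the isomorphism, identifying $Q(\mathbf{2})$ with the kernel $I_2$ of $T(Z)$, and then derive the conclusion from the structural mismatch between these two kernels when $|X/E|>1$. The only divergence is in how that mismatch is cashed out. The paper observes that every element of $I_2$ is an idempotent, hence every element of $Q(\mathbf{2})$ would have to be one, and then explicitly constructs a non-idempotent $\beta\in Q(\mathbf{2})$ by swapping representatives of two distinct $E$-classes. You instead note that $I_2$ is a right zero semigroup, hence a band whose subgroups are all trivial, while $Q(\mathbf{2})$ is a union of copies of the symmetric group on a cross section of $X/E$ (a set of cardinality $|X/E|$), so the isomorphism, which matches maximal subgroups, forces that symmetric group to be trivial, i.e.\ $|X/E|\leq 1$. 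Your version leans on the earlier union-of-symmetric-groups theorem plus the standard fact that a subgroup of a band is trivial, whereas the paper's final step is more self-contained and elementary; both are sound, and your remark on the degenerate case $|Z|\leq 1$ is a reasonable (if slightly informal) way to close that loose end.
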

\begin{proof}
	($\Leftarrow$). Assume that $E=X\times X$. Then $\reg(T)=T(X)$.
	
	($\Rightarrow$). Suppose that $\reg(T)\cong T(Z)$ for some set $Z$. Then there is an isomorphism $\varphi:T(Z)\to\reg(T)$. Let $I_2$ be the minimal ideal of $T(Z)$ as defined above. For each $\alpha\in I_2$, we can write
	\[
	\alpha= \left(
	\begin{array}{c}
	Z \\
	a \\ 
	\end{array}
	\right)
	\]
	where $a\in Z$ and hence, clearly, it is an idempotent. Thus $I_2\varphi$ is the minimal ideal in $\reg(T)$ which implies by Theorem \ref{thm: minimal ideal} that $I_2\varphi=Q(\mathbf{2})$. Thus every element in $Q(\mathbf{2})$ is also an idempotent. Suppose to a contrary that $E\neq X\times X$. Then $|X/E|>1$. Let $X/E=\{A_i:i\in I\}$ and let $A_{i_1},A_{i_2}$ be distinct $E$-classes in $X/E$. Choose $a\in A_{i_1}$, $b\in A_{i_2}$ and $c_i\in A_i$ for each $i\in I$ such that $i_1\neq i\neq i_2$. Define a function $\beta:X\to X$ By
	\[
	x\beta =\begin{cases} ~~b &  \begin{text}{if $x\in A_{i_1}$}\end{text}\\ 
	~~a &  \begin{text}{if $x\in A_{i_2}$}\end{text}\\
	~~c_i &  \begin{text}{if $x\in A_i$ and $i_1\neq i\neq i_2$.}\end{text}\end{cases}
	\]
	It is easy to see that $\beta\in Q(\mathbf{2})$ but not an idempotent which is a contradiction. Therefore, $E=X\times X$.
\end{proof}

Finally, we deal with the embeddability of a right group $S$ into the kernel of $\reg(T)$. Refer to \cite[Exercise 6 for \S2.6]{Howie} and \cite[Exercise 3 for \S1.11]{Clifford1}, it is straightforward to verify that every right group $S$ is the union of a set $\{Se:e\in E(S)\}$ of isomorphic disjoint groups and a subgroup $Se$ of $S$ contains an idempotent $e$ as the group identity.

The \textit{inner right translation} on a semigroup $S$ corresponding to the element $a$ of $S$ is the mapping $\rho_a:S\to S$ defined by $x\mapsto xa$. One can easily see that, for each $a,b\in S$, $\rho_a\rho_b=\rho_{ab}$ and so the set $\{\rho_a:a\in S\}$ is a subsemigroup of $T(S)$ under the composition. The mapping $a\mapsto\rho_a$ is called the \textit{regular representation} of $S$.

Given a right group $S$, it is a natural question whether there exists the subsemigroup $T$ of $T(S)$ such that $S$ can be embedded into $T$ and $T$ is also a right group. To answer this question, we are in position to consider the embedding problem for the right group.

\begin{thm}
	Any right group $S$ can be embedded into the subsemigroup $Q(\mathbf{2})$ of $\reg(T_{E^*}(S))$ for some equivalence relation $E$ on $S$.
\end{thm}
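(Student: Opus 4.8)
The plan is to embed $S$ via its regular representation $\rho:a\mapsto\rho_a$ and to exhibit an equivalence relation $E$ on $S$ for which the image $\{\rho_a:a\in S\}$ lands inside $Q(\mathbf{2})$. Since the paper already records that $\rho$ is a homomorphism (because $\rho_a\rho_b=\rho_{ab}$), I first note that it is injective: if $\rho_a=\rho_b$ then $xa=xb$ for every $x\in S$, and left cancellativity of the right group $S$ forces $a=b$. Thus $\rho$ embeds $S$ into $T(S)$, and the whole problem reduces to choosing $E$ so that every $\rho_a$ is simultaneously $E^*$-preserving, regular, and collapses each $E$-class to a single point.

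The crux is the choice of $E$. Fix an idempotent $e_0\in E(S)$ (one exists since $S$ is regular) and set $E=\ker(\rho_{e_0})=\{(x,y):xe_0=ye_0\}$. One should resist the tempting partition of $S$ into its maximal subgroups $Se$: under that relation every $\rho_a$ pushes all of $S$ into a single class, so the reverse implication of $E^*$-preservation fails. Instead, writing $G=Se_0$ (a group with identity $e_0$), the relation $E$ identifies elements sharing the same ``$G$-coordinate'' $xe_0$. Two identities drive every verification. Since $e_0$ is a left identity (every idempotent in a right group is, as follows from right simplicity), one has $xa=(xe_0)a$ for all $x$, and consequently $(xa)e_0=(xe_0)(ae_0)$. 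Because $g:=xe_0$ depends only on the $E$-class of $x$, the first identity immediately gives $|A\rho_a|=1$ for each $A\in S/E$, and it also shows $S\rho_a=Sa=Ga$.

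The main obstacle is $E^*$-preservation, specifically its reverse direction, and this is exactly where the group structure is indispensable. Setting $h:=ae_0\in G$, the second identity reads $(xa)e_0=(xe_0)h$; since $h$ is invertible in the group $G$, right multiplication by $h$ is a bijection of $G$, so $xe_0=ye_0\Leftrightarrow(xe_0)h=(ye_0)h\Leftrightarrow(xa)e_0=(ya)e_0$, which is precisely $(x,y)\in E\Leftrightarrow(x\rho_a,y\rho_a)\in E$. The same bijectivity settles regularity: the $e_0$-coordinates of $Sa=Ga$ are $(Ga)e_0=Gh=G$, so $Sa$ meets every $E$-class. Hence each $\rho_a\in Q(\mathbf{2})$, and $\rho$ embeds $S$ into $Q(\mathbf{2})\subseteq\reg(T_{E^*}(S))$ as required. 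The only genuinely delicate points are recognizing that $\ker(\rho_{e_0})$, rather than the group partition, is the right relation, and that right cancellation inside $G$ is what supplies the backward half of $E^*$-preservation.
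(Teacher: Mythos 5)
Your proposal is correct and follows essentially the same route as the paper: the regular representation $s\mapsto\rho_s$ together with an equivalence relation built from the idempotents of $S$. Your relation $\ker(\rho_{e_0})$ is in fact the same partition as the paper's $[s]=\{se_i:i\in I\}$ (if $xe_0=ye_0$ and $y\in Se_m$ then $y=ye_m=xe_0e_m=xe_m\in[x]$, and conversely $e_ke_0=e_0$ gives the other inclusion), and your verification via invertibility of $ae_0$ in the group $Se_0$ is a tidy repackaging of the paper's computations.
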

\begin{proof}
	Let $E(S)=\{e_i:i\in I\}$ be the set of all idempotents in $S$. Then $S$ the union of a set $\{Se_i:i\in I\}$ of isomorphic disjoint groups. We note that for each $i\in I$, the element $e_i$ is the group identity of $Se_i$. For each $s\in S$, define a subset $[s]$ of $S$ by
	\[
	[s]=\{se_i:i\in I\}.
	\]
	We claim that the class $\{[s]:s\in S\}$ forms a partition of $S$. Clearly, $\bigcup_{s\in S}[s]=S$. If $x\in[s]\cap[t]$, then $x=se_i=te_j$ for some $i,j\in I$ which implies that $x\in Se_i\cap Se_j$. Hence $i=j$ and so $se_i=te_i$. Let $y\in[s]$. Then $y=se_k$ for some $k\in I$. By \cite[Exercise 6 for \S2.6]{Howie}, we have $E(S)$ is a right zero subsemigroup of $S$. It follows that
	\[
	y=se_k=se_ie_k=te_ie_k=te_k\in[t]
	\] 
	and thus $[s]\subseteq[t]$. Similarly, we can show that $[t]\subseteq[s]$ and so $[s]=[t]$. Define an equivalence relation $E$ on $S$ by $S/E=\{[s]:s\in S\}$. 
	
	Next, we shall construct a semigroup monomorphism from the right group $S$ into the subsemigroup $Q(\mathbf{2})$ of $\reg(T)=\reg(T_{E^*}(S))$. For each $s\in S$, let $\rho_s:S\to S$ be the inner right translation on $S$ corresponding to $s$, that is, $x\mapsto xs$. We first show that $\rho_s\in Q(\mathbf{2})$.  Assume that $s\in Se_k$ for some $k\in I$.
	
	Let $x,y\in[t]$. Then $x=te_i$ and $y=te_j$ for some $i,j\in I$. We obtain $x\rho_s=xs=te_is=ts$. Similarly, we can show that $y\rho_s=ts=x\rho_s$. Therefore, $\rho_s$ is $E$-preserving and $|[t]\rho_s|=1$ for all $t\in S$. 
	
	Conversely, let $x\rho_s,y\rho_s\in[t]$. Then $xs=te_i$ and $ys=te_j$ for some $i,j\in I$. We can see that
	\[
	xs=xse_k=te_ie_k=te_k=te_je_k=yse_k=ys
	\]
	and so $xe_k=xss^{-1}=yss^{-1}=ye_k$ where $s^{-1}$ is the group inverse of $s$ in $Se_k$. Suppose that $x\in Se_m$ for some $m\in I$. Then 
	\[
	x=xe_m=xe_ke_m=ye_ke_m=ye_m\in[y]
	\] 
	and so $[x]=[y]$. Hence $\rho_s$ is $E^*$-preserving. 
	
	Let $t\in S$. It is clear that $(ts^{-1})\rho_s=ts^{-1}s=te_k\in[t]$. Thus $[t]\cap S\rho_s\neq\emptyset$ for all $[t]\in S/E$ and so $\rho_s\in\reg(T)$. Now, it is concluded that $\rho_s\in Q(\mathbf{2})$, as required.
	
	Finally, define the regular representation $\psi:S\to Q(\mathbf{2})$ by $s\mapsto\rho_s$. It is routine to verify that $\psi$ is a homomorphism. To prove that $\psi$ is injective, assume that $\rho_s=\rho_t$. If $s\in Se_i$ and $t\in Se_j$ for some $i,j\in I$, then
	\[
	s=e_is=e_i\rho_s=e_i\rho_t=e_it=t.
	\]
	Therefore, $\psi$ is a monomorphism.
\end{proof}

Similar to group theory, the semigroup $Q(\mathbf{2})$, which is the union of symmetric groups, plays the same role as the symmetric group in Cayley's Theorem.

\subsection*{Acknowledgments} This research was supported by Chiang Mai University.

\subsection*{Conflict of interest disclosures} The author states no conflict of interest.

\bibliographystyle{abbrv}\addcontentsline{toc}{section}{References}
\bibliography{References}

\begin{flushleft}
\vskip.3in

KRITSADA SANGKHANAN, Research Center in Mathematics and Applied Mathematics, Department of Mathematics, Faculty of Science, Chiang Mai University, Chiang Mai, 50200, Thailand; e-mail: kritsada.s@cmu.ac.th

\end{flushleft}
\end{document}